 \DeclareMathOperator{\cl}{cl}
 \DeclareMathOperator{\bd}{bd}
\newcommand{\R}{\mathbb{R}}
     \newcommand{\N}{\mathbb{N}}
\newcommand{\Q}{\mathbb{Q}}
\newcommand{\Pa}{\mathcal{P}}
\newcommand{\K}{\mathcal K}
\newcommand{\C}{\mathbb{C}}
\newtheorem{theorem}{Theorem}[section]
\newtheorem{corollary}[theorem]{Corollary}
\newtheorem{proposition}[theorem]{Proposition}
\newtheorem{fact}[theorem]{Fact}
\theoremstyle{definition}
\theoremstyle{remark}
\newtheorem{remark}[theorem]{Remark}
\newtheorem{question}[theorem]{Question}
\numberwithin{equation}{section}
\begin{document}

\title{On hyperspaces of knots and planar simple closed curves}

\author{Pawe{\l}  Krupski}

\email{pawel.krupski@pwr.edu.pl}
\address{Faculty of Pure and Applied Mathematics, Wroc{\l}aw University of Science and Technology, Wybrze\.{z}e Wys\-pia\'n\-skiego 27, 50-370 Wroc{\l}aw, Poland.}

\author{Krzysztof Omiljanowski}
\email{Krzysztof.Omiljanowski@math.uni.wroc.pl}
\address{Mathematical Institute, University of Wroc{\l}aw, Pl. Grunwaldzki 2, 50-384 Wroc{\l}aw, Poland}
\date{\today}
\subjclass[2020]{Primary 57K10 ; Secondary 54B20, 54F45, 54H05}
\keywords{absolute neighborhood retract, Borel set, Cantor manifold, dimension, hyperspace, isotopy, knot, locally contractible space, simple closed curve}

\begin{abstract}
 We consider the Vietoris  hyperspaces $\mathcal S(\R^n)$ of simple closed curves in $\R^n$, $n=2,3$, and their subspaces $\mathcal S_P(\R^2)$ of planar simple closed polygons, $\mathcal K_P$ of polygonal knots, and $\mathcal K_T$ of tame knots. We prove that all the hyperspaces are strongly locally contractible, arcwise connected, infinite-dimensional Cantor manifolds, and $\mathcal S(\R^2)$ and $\mathcal K_T$ are strongly infinite-dimensional Cantor manifolds. Moreover, $\mathcal S_P(\R^2)$ and $\mathcal K_P$ are $\sigma$-compact, strongly countable-dimensional absolute neighborhood retracts.

\end{abstract}
\maketitle

\section{Introduction}

S. B. Nadler, Jr.~\cite[Chapter XIX]{Na}   asked a series of questions about  properties and possible characterizations of the Vietoris hyperspaces $\mathcal A(\R^n)$  of arcs, $\Pa(\R^n)$ of pseudoarcs and    $\mathcal S(\R^n)$ of simple closed curves in the Euclidean space $\R^n$, $n>1$. R. Cauty   showed in~\cite{Ca1}  that $\mathcal A(\R^2)$ is an absorbing set in the Borel class $F_{\sigma\delta}$, so it is  homeomorphic to the  infinite-dimensional space  $c_0$ of real sequences converging to 0 (with coordinate-wise convergence); he also proved  in~\cite{Ca2} that $\Pa(\R^2)$  is homeomorphic to the Hilbert space $l_2$.

Little is known on $\mathcal S(\R^n)$. It was observed in~\cite{Na}  that, for each $n>1$,  $\mathcal S(\R^n)$ is of the first category in itself,  contains  a Hilbert cube, and $\mathcal S(\R^2)$ is homogeneous. Moreover,  $\mathcal S(\R^n)$ is an absolute $F_{\sigma\delta}$-set which is not $G_{\delta\sigma}$, since the true $F_{\sigma\delta}$-space  $c_0$ embeds in it as a closed subset~\cite{K1},~\cite{K1e}.
Establishing that a space is an absolute neighborhood retract and finding its exact Borel  class are  first steps toward a topological characterization by  methods of infinite-dimensional topology.

In this paper we focus on Vietoris hyperspaces $\mathcal S(\R^n)$ for $n\in\{2,3\}$ and some of their special subspaces. For $n=2$ we will consider the hyperspace of simple closed polygons $\mathcal S_P(\R^2)\subset \mathcal S(\R^2)$.  Simple closed curves in $\R^3$ are called knots, therefore we use special notation in that case:

$\mathcal K:=\mathcal S(\R^3)=$
the hyperspace of all knots in $\R^3$,

$\mathcal K_P:= \text{the hyperspace of polygonal knots}$,

$\mathcal K_T:= \text{the hyperspace of tame knots}$,

 $\mathcal K_W:= \text{the hyperspace of wild knots}$.

We assume that a knot $C$ is \emph{tame} if there is an autohomeomorphism of $\R^3$ transferring $C$ onto a polygonal knot; otherwise, $C$ is \emph{wild}.

In Section 2 we prove a stronger form of  local contractibility of hyperspaces   $\mathcal S_P(\R^2)$, $\mathcal S(\R^2)$,  $\mathcal K_P$, and  $\mathcal K_T$. We use it to show the arcwise connectedness of $\K$, $\mathcal K_P$, and  $\mathcal K_T$.

  Let us recall two related results concerning the complexity of spaces of knots:
 \begin{itemize}
\item
The set $\{f: S^1\to \R^3: \text{$f$ is an embedding such that $f( S^1)$ is wild}\}$ is comeager  in the space of all embeddings  with the uniform convergence topology~\cite{Mi}.
\item
The relation of knot equivalence on $\mathcal K$ and on $\mathcal K_W$  is not classifiable by countable structures~\cite{Ku}.
\end{itemize}
The issue  was also discussed, e.g.,  in~\cite{MO1, MO2}.

We are interested in the classical Borel complexity of   hyperspaces $\mathcal S_P(\R^2)$, $\mathcal K_P$, $\mathcal K_T$, and $\mathcal K_W$.
 In Section 3,  the exact Borel class of $\mathcal S_P(\R^2)$ and $\mathcal K_P$ is evaluated as   $F_\sigma$ and not $G_\delta$-subsets of the Vietoris hyperspaces $C(\R^2)$ and  $C(\R^3)$ of all continua in $\R^2$ and $\R^3$, respectively.   We also reestablish the fact  that $\mathcal K_T$ and $\mathcal K_W$ are Borel subsets of  $C(\R^3)$. Their exact Borel classes are unknown but  $\mathcal K_T$ is not $G_{\delta\sigma}$.

 In Section 4, hyperspaces $\mathcal S(\R^2)$ and $\mathcal K_T$ are shown to be strongly infinite-dimensional Cantor manifolds each of whose  nonempty open subsets contains a  closed copy of the space $$\hat{c_0}=\{(x_j)_{j=1}^\infty\in [0,1]^\N: \lim_j x_j=0\}$$ (which is known to be homeomorphic to $c_0$~\cite{DMM}); hyperspaces
$\mathcal S_P(\R^2)$ and $\mathcal K_P$ are infinite-dimensional Cantor manifolds containing  closed copies of $$\sigma=\{(x_j)_{j=1}^\infty\in [0,1]^\N: \text{$x_j=0$ for almost all $j$}\}$$ in each nonempty open subset. Finally, it is  observed that  $\mathcal S_P(\R^2)$ and $\mathcal K_P$ are $\sigma$-compact, strongly countable-dimensional absolute neighborhood retracts (ANR's).

\

\section{Strong local contractibility}

Let us call a space $X$  \emph{strongly locally contractible} at $x\in X$ if $X$ has a neighborhood basis at $x$ of open contractible subsets.

 Recall that basic open sets in the Vietoris topology in the hyperspace $C(X)$ of all nonempty continua in $X$ are of the form
$$\langle U_1,\dots,U_n\rangle= \{C\in C(X):  C\subset \bigcup_{k=1}^n U_k\ \text{and}\  \forall\,k\  (U_k\cap C\neq\emptyset)\,\}, n\in \N,$$ where sets $U_k$ belong to any open base in $X$. If $H:X\to Y$ is a continuous map, then by an \emph{induced map} we mean the continuous map $\mathcal{H}:C(X)\to C(Y)$ defined by  $\mathcal{H}(C)=H(C)$.

Let us fix a natural number $m$, $m\geq 6$,
and the simple closed polygon $P$ in the complex plane $\C$
 with vertices $e^{i\frac{2\pi j}{m}}$,  $j=1,\dots,m$.

For any integer $n>6$,  fix the annulus neighborhoods of $P$:
\\
$U=\left\{tz: z\in P, \; 1-\frac{6}{n} < t <  1+\frac{6}{n}\right\} $,
 $K=\left\{t z: z\in P, \; 1-\frac{1}{n} < t <  1+\frac{1}{n}\right\} $,
\\
with components of boundaries:
\\
$P_+=\left\{tz: z\in P, \; t = 1+\frac{6}{n}\right\} $,
 $\ \ K_+=\left\{tz: z\in P, \; t =  1+\frac{1}{n}\right\} $,
\\
$\ \ P_-=\left\{tz: z\in P, \; t = 1-\frac{6}{n}\right\} $,
$\ \ K_-=\left\{tz: z\in P, \; t =  1-\frac{1}{n}\right\} $.
\\
For $k=1,\ldots,n$, cover $U$ and $K$ with small open sectors:
\\
$\ \ U_k=\left\{z\in U: \frac{2\pi (k-1)}{n}-\frac{1}{n} < \arg z <  \frac{2\pi k}{n}+\frac{1}{n}\right\} $,
\\
 $\ \ K_k=\left\{z\in K: \frac{2\pi (k-1)}{n}-\frac{1}{n} < \arg z <  \frac{2\pi k}{n}+\frac{1}{n}\right\} $
\\ 
  (see Figure 1).

Note that the open polyhedra  $U_1,\dots,U_n$ form a circular chain. % whose union is $U$.
The collection $\{\langle U_1,\dots,U_n\rangle\}_{n=7,8,\ldots}$ is a local base of $C(\C)$ at $P$.

Denote also:
$$\widetilde{U_k}= U_k\times (-\frac{1}{n},\frac{1}{n}),\quad \widetilde{K_k}= K_k\times (-\frac{1}{n},\frac{1}{n})\quad\text{for each $k\in\{1,\dots,n\}$},
$$
$$\widetilde{U}=U\times (-\frac{1}{n},\frac{1}{n}),\quad \widetilde{K}=K\times (-\frac{1}{n},\frac{1}{n}).$$
Similarly, observe that the collection $\{\langle \widetilde{U_1},\dots,\widetilde{U_n}\rangle\}_{n=7,8,\ldots}$ is a local base of $C(\R^3)$ at $P\times \{0\}$.

A family $\mathcal C$ of subsets of a Euclidean  space is \emph{PL-topological} if, for every $C\in \mathcal C$,  each  image of $C$ under a PL-homeomorphism into the space belongs to $\mathcal C$.
\begin{theorem}\label{t1}
If $\mathcal C$ is any PL-topological family of  continua in $\C$ (in $\R^3$) containing  $P$ ($P\times \{0\}$), then the neighborhood $\langle U_1,\dots,U_n\rangle\cap \mathcal C$ of $P$ in $\mathcal C$ (the neighborhood $\langle \widetilde{U_1},\dots,\widetilde{U_n}\rangle\cap \mathcal C$ of $P\times \{0\}$ in $\mathcal C$) is contractible in itself to $P$ (to $P\times \{0\}$, respectively).
\end{theorem}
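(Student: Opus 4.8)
The plan is to build the contraction out of \emph{ambient} PL-homeomorphisms applied to each continuum, so that I can set $H(C,s)=h_{C,s}(C)$ and have $H(C,s)\in\mathcal C$ for every $s<1$ (as a PL-image of $C\in\mathcal C$), while putting $H(C,1)=P$ and checking $h_{C,s}(C)\to P$ in the Vietoris metric as $s\to 1$. The structural observation that drives everything is this: since each $C\in\langle U_1,\dots,U_n\rangle$ is a \emph{continuum} (connected) lying in the annulus $U$, and $\arg$ is continuous on $U$, the set $\arg(C)\subseteq S^1$ is connected, hence is either all of $S^1$ or a single closed arc; and since $C$ meets every sector $U_k$, in the arc case the complementary \emph{gap} is shorter than the angular width $\tfrac{2\pi}{n}+\tfrac{2}{n}$ of one sector. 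Thus the radial projection of $C$ onto $P$ fails to be onto in at most one controlled place, and repairing exactly that gap is the whole point.

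The first ingredient is a \emph{radial squeeze} toward $P$. I triangulate the closed annulus bounded by $P_-$ and $P_+$ so that its $1$-skeleton contains the radial segments through the vertices of $P$ and through the rays bounding the sectors; I fix $P$ pointwise, move $P_+$ and $P_-$ toward $P$ by the factor $(1-s)$, and extend linearly. This is a PL-homeomorphism carrying each radial wedge into itself, so it distorts arguments by no more than the angular mesh and keeps $h_{C,s}(C)$ meeting every $U_k$; its limit as $s\to1$ is the radial retraction $\pi$ onto $P$, and $\pi(C)$ is precisely the arc $\arg(C)$ traced on $P$. When $\arg(C)=S^1$ (e.g.\ $C$ winds around $U$) this already yields $\pi(C)=P$, and the radial squeeze alone suffices.

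The second ingredient is an \emph{angular gap-closing}, needed exactly when $\arg(C)$ is a proper arc with a gap $G(C)$. Here I take $\psi_{C,s}$ to be the PL self-homeomorphism of the circle that compresses $G(C)$ to an arc of length $(1-s)\,|G(C)|$ and stretches $\arg(C)$ correspondingly; it depends continuously on the endpoints of $G(C)$, hence on $C$ (as $C\mapsto\arg(C)$ is continuous in the Hausdorff metric, $\arg$ being uniformly continuous on $U$), and it reduces to the identity when $|G(C)|=0$, which is what matches the $\arg(C)=S^1$ case continuously. I realize $\psi_{C,s}$ as an ambient PL-homeomorphism of the annulus acting on the angular coordinate. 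Because the compressed gap stays shorter than a sector, the image still meets every $U_k$, and as $s\to1$ the gap closes, so the angular coverage fills out all of $S^1$. Composing this with the radial squeeze (both with parameter $(1-s)$) and extending by the identity outside a collar of $U$ gives, for $s<1$, an ambient PL-homeomorphism $h_{C,s}$ with $h_{C,0}=\mathrm{id}$ and $h_{C,s}(C)\in\langle U_1,\dots,U_n\rangle\cap\mathcal C$; since both the radial width and the gap shrink to $0$ uniformly, $h_{C,s}(C)\to P$, so setting $H(C,1)=P$ yields a continuous contraction (continuity of the induced maps on $[0,1)$, and joint continuity at $s=1$ from the uniform estimate). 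For the $\R^3$ statement I run the same argument in the $\C$-factor after noting $\pr_{\C}(C)$ is again a continuum, so the single-gap analysis applies, and I additionally compose with the PL collapse $(z,w)\mapsto(z,(1-s)w)$ of the third coordinate, so that $h_{C,s}(C)\to P\times\{0\}$.

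The main obstacle I expect is \emph{not} the radial squeeze but the angular gap-closing: producing a genuinely PL family $\psi_{C,s}$ that (i) varies continuously with $C$ and degenerates to the identity precisely as the gap shrinks to a point, merging seamlessly with the winding case, and (ii) is guaranteed to keep every image continuum meeting all $n$ sectors throughout. Both points rely on the quantitative comparison ``gap length $<$ sector width,'' and the fact that connectedness of $C$ forces a \emph{single} gap is exactly what reduces this to a one-parameter adjustment rather than an uncontrolled one; the remaining work is the routine but careful verification that the composed map is PL with respect to a sufficiently fine adapted triangulation and extends over a collar to a PL-homeomorphism of the whole space.
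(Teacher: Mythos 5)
Your proposal takes a genuinely different route from the paper, and in outline it is viable. The paper never tailors anything to the individual continuum $C$: it builds one fixed ambient PL pseudo-isotopy in three moves --- a radial squeeze into the thin annulus $K$; then an isotopy $H_2$ that folds, stretches and wraps two disjoint sectors $K_i,K_j$ around $P$, justified by the combinatorial observation that every continuum in $\langle K_1,\dots,K_n\rangle\cap\mathcal C$ contains a subcontinuum crossing $U_i$ or $U_j$ from side to side, so that after wrapping \emph{every} image continuum radially projects \emph{onto} $P$; and finally a radial collapse onto $P$, which lands in $\mathcal C$ precisely because of that surjectivity. Since the contraction is induced by a single pseudo-isotopy, its continuity is automatic. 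You instead replace the wrapping trick by the single-gap structure of $\arg(C)$ and use homeomorphisms $h_{C,s}$ depending on $C$; that is legitimate (a hyperspace contraction need not be induced by an ambient isotopy), but it shifts the work onto proving that $(C,s)\mapsto h_{C,s}$ is continuous, including the degeneration to the identity as the gap closes --- which you do address, and which can be made to work.

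There is, however, one concrete error that must be repaired. Your bound ``the gap is shorter than the angular width $\tfrac{2\pi}{n}+\tfrac{2}{n}$ of one sector'' is false: meeting every $U_k$ only forbids the gap from \emph{containing} a full sector interval $I_k=\bigl(\tfrac{2\pi(k-1)}{n}-\tfrac1n,\tfrac{2\pi k}{n}+\tfrac1n\bigr)$, and since consecutive intervals are offset by $\tfrac{2\pi}{n}$, the gap can have length up to nearly $\tfrac{4\pi+2}{n}$ --- almost two sector widths. Consequently your justification that the deformed continuum still meets every $U_k$ (``because the compressed gap stays shorter than a sector'') breaks down for small $s$, when the compressed gap may still be longer than a sector. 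The fix is simple and should replace that argument: choose $\psi_{C,s}$ so that it compresses the gap $G(C)$ \emph{into itself} (toward its midpoint). Then the compressed gap $G'\subseteq G(C)$ contains no $I_k$ simply because $G(C)$ contains none, so the image's argument set, being the complement of $G'$, meets every $I_k$ at every time $s$, with no length estimate needed. A second, smaller caveat: a map that literally acts on the $\arg$-coordinate with the radial coordinate fixed is \emph{not} PL in Cartesian coordinates, since $\arg$ is nonlinear along the edges of $P$; you should instead take a PL self-homeomorphism $\zeta_{C,s}$ of the polygon $P$ and extend it by $tz\mapsto t\,\zeta_{C,s}(z)$, which is piecewise affine on the trapezoids of the polygonal annulus --- this realizes your angular compression, keeps all intermediate images in $\mathcal C$, and also shows that no extension by the identity outside $U$ is actually required, since PL embeddings of a neighborhood of $C$ into the plane already suffice for PL-topological families.
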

\begin{proof}
Firstly, we consider the planar case. The required contraction is defined in three moves.

\noindent
(1) Define  $H_1:U\times [0,1]\to  U$ as follows.
Rays $\arg z=\frac{2\pi j}{m}$, $j=1,\ldots,m$ and $\arg z=\frac{2\pi k}{n}\pm \frac{1}{n}$, $k=1,\ldots,n$ divide
$U$ into trapezoids. Let $T$ be any of them with vertices $a_+,\;b_+,\;b_-,\;a_-$, where $a_+,\;b_+\in P_+$ and $\arg a_+=\arg a_-$.
Segments joining the interior point $m=\frac{1}{4}(a_++a_-+b_++b_-)$ with vertices of $T$ divide $T$ into 4 triangles.
For $t\in[0,1]$,
let $H_1(m,t)=m$ and
$$\begin{array}{rcl}
 H_1(a_\pm,t) & = & \frac{1}{2}(a_+ +a_-)\pm (1-\frac{5}{6}t)\frac{1}{2}(a_+-a_-), \\
 H_1(b_\pm,t) & = & \frac{1}{2}(b_+ +b_-)\pm (1-\frac{5}{6}t)\frac{1}{2}(b_+-b_-), \\
\end{array}$$
and  extend $H_1(\cdot,t)$ linearly over all these triangles.

Observe that $H_1$ is a PL-isotopy and $H_1(U_k\times\{1\})=K_k$,  $k=1,\ldots,n$.

Notice that, for any $t\in [0,1]$,
 $$\text{$H_1(C,t)\in \langle U_1,\dots,U_n\rangle\cap \mathcal C$ for each $C\in \langle U_1,\dots,U_n\rangle\cap \mathcal C$}$$
  which means that the induced map $\mathcal H_1$  on $\langle U_1,\dots,U_n\rangle\cap \mathcal C \times [0,1]$ is a continuous deformation whose image remains in  $\langle U_1,\dots,U_n\rangle\cap \mathcal S_P(\R^2)$.

\

\noindent
(2) Let us fix disjoint $U_i,U_j$, ($1<|i-j|<n-1$). Observe that
\begin{multline}\label{p1}
\text{each $C\in  \langle K_1,\dots,K_n\rangle\cap \mathcal C$ contains a subcontinuum  $I_C$ satisfying either}\\
\text{$I_C\subset \cl(U_i)\cap K$ and $I_C$ intersects both components of $\bd(U_i)\cap K$}\\
 \text{or}\\
\text{$I_C\subset \cl(U_j)\cap K$ and $I_C$ intersects both components of $\bd(U_j)\cap K$.}
\end{multline}

\begin{figure}[h]
\begin{center}
\includegraphics[height=14cc]{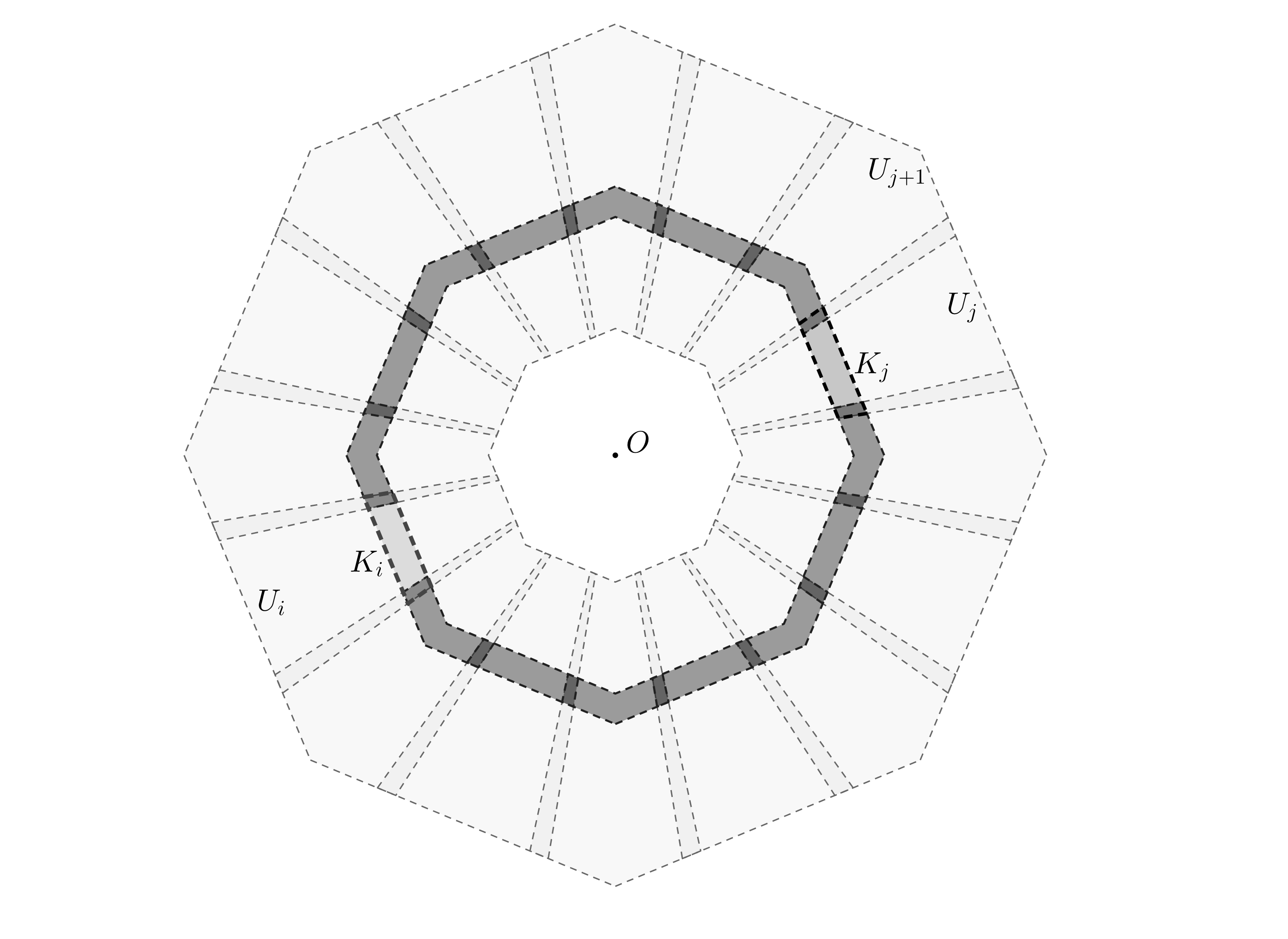}
\end{center}
\caption{$n=2m$}
\end{figure}

Now,  continuously with respect to parameter $t\in[1,2]$ and keeping  points of $K\setminus (U_i\cup U_j)$ fixed,  we PL-deform discs $K_i$ and $K_j$ in $U$ by  folding, stretching,  and wrapping them around $P$,  keeping them ``$\frac{2}{n}$-thin'' and mutually disjoint. The final folded disks for  $t=2$ are  open  disks $K_i'$ and $K_j'$  as pictured in Figure 2.

\begin{figure}[h]
\begin{center}
\includegraphics[height=14cc]{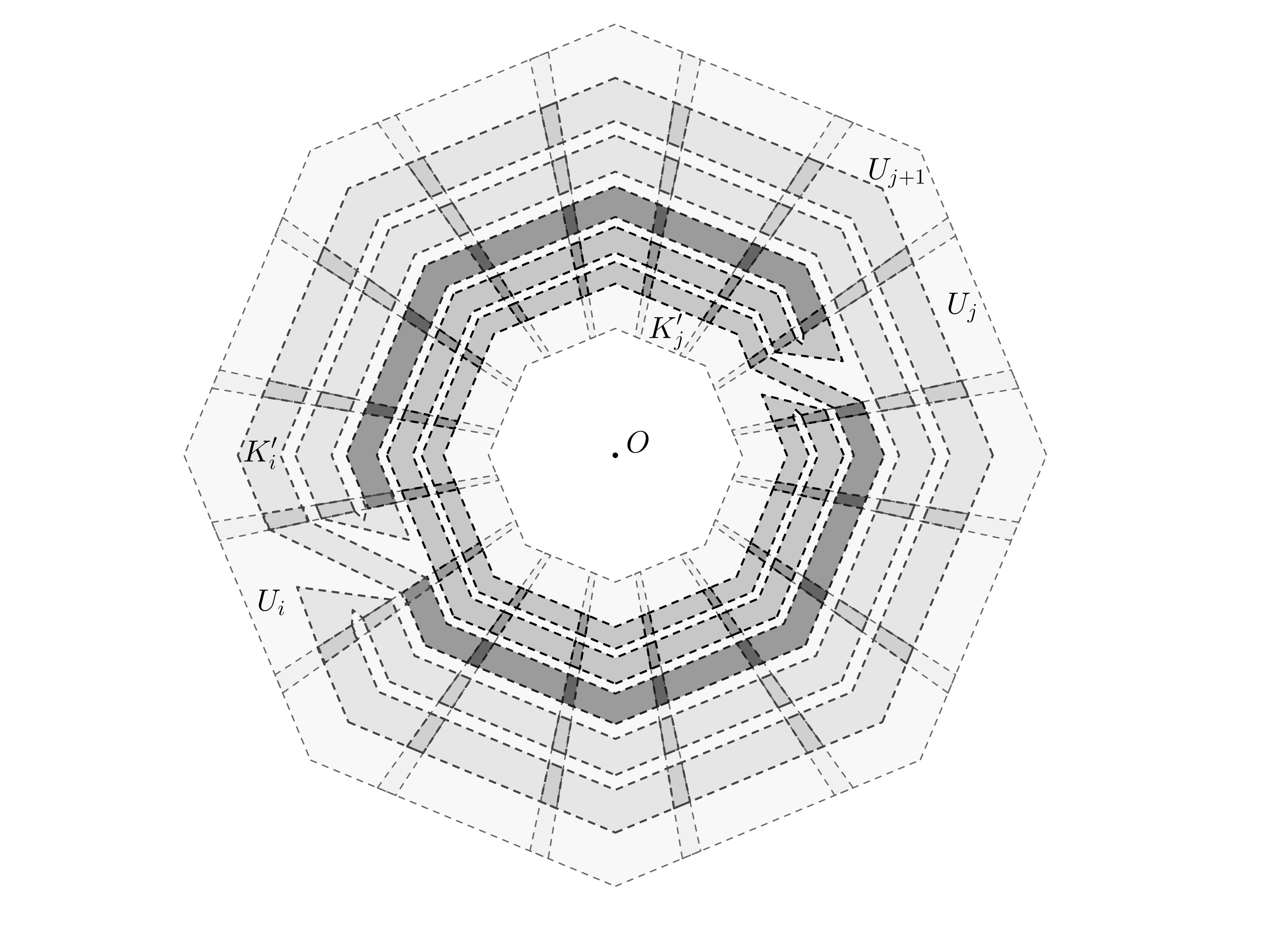}
\end{center}
\caption{$H_2$ deforms $K_i$ to $K_i'$ and $K_j$ to $K_j'$.}
\end{figure}

More formally, the continuous deformation of $K_i$ and $K_j$  can be  realized by a PL-isotopy
$H_2:K\times [1,2]\to U$ such that
\begin{itemize}
\item
$H_2(z,1)=z$ for each $z\in K$,
\item
$H_2(z,t)=z$ for each $z\in K\setminus (U_i\cup U_j)$ and $t\in[1,2]$,
\item
$H_2(K_i,2)= K_i'$ and $H_2(K_j,2)= K_j'$.
\end{itemize}

 The isotopy $H_2$   induces the deformation
 \begin{equation*}
 \mathcal H_2:  \langle K_1,\dots,K_n\rangle\cap \mathcal C\times [1,2] \to  \langle U_1,\dots,U_n\rangle\cap \mathcal C,
    \end{equation*}
   and    it follows from property~\eqref{p1} that, for each
   $C\in  \langle K_1,\dots,K_n\rangle\cap \mathcal C$, the continuum $\mathcal H_2(C,2)$ radially projects onto $P$  (which need not be the case for  $C$ itself).

 \

\noindent
(3)  A pseudo-isotopy $H_3:U\times [2,3] \to U$ is defined similarly to $H_1$.
Namely, for each trapezoid $T$ and its triangulation  define $H_3$ on their vertices:
$H_3(m,t)=m$ and
$$\begin{array}{rcl}
 H_3(a_\pm,t) & = & \frac{1}{2}(a_+ +a_-)\pm (3-t)\frac{1}{2}(a_+-a_-), \\
 H_3(b_\pm,t) & = & \frac{1}{2}(b_+ +b_-)\pm (3-t)\frac{1}{2}(b_+-b_-), \\
\end{array}$$
 for $t\in[2,3]$, and  extend $H_3(\cdot,t)$ linearly over all  triangles of the triangulation.

Note that  $H_3(z,2)=z$ for each $z\in U$, $H_3(z,t)=z$ for $(z,t)\in P\times [2,3]$,
and $H_3(\cdot,t)$  is a PL-homeomorphism for $t\in[2,3)$.
The induced homotopy $\mathcal H_3$ at $t=3$ maps certain continua in $\langle U_1,\dots,U_n\rangle \cap\mathcal C$ onto arcs in $P$; nevertheless, it is well defined on the set   $\mathcal H_2( \langle K_1,\dots,K_n\rangle\cap \mathcal C,2)$ as a map to $\langle U_1,\dots,U_n\rangle\cap \mathcal C$, since continua from the set radially project onto $P$ at $t=3$.

\

Finally,  define a PL-pseudo-isotopy  $H:U\times[0,3]\to U$ :
\begin{equation}
H(z,t)=
\left\{
  \begin{array}{ll}
   H_1(z,t) & \hbox{\text{for $t\in[0,1]$};} \\
   H_2( H_1(z,1),t) & \hbox{\text{for $t\in[1,2]$};} \\
   H_3( H_2( H_1(z,1),2),t) & \hbox{\text{for $t\in[2,3]$}.}
  \end{array}
\right.
\end{equation}
The induced homotopy $\mathcal H$ contracts  $\langle U_1,\dots,U_n\rangle\cap \mathcal C$  to $P$.

\

In the 3-space case of Theorem~\ref{t1},   let us define $\widetilde{H}:\widetilde{U}\times[0,3]\to\widetilde{U}$ by
$\widetilde{H}(x,y,t)=\left(H(x,t),(1-\frac{1}{3}t)y\right)$. Then the induced homotopy $\widetilde{\mathcal H}$ is the required contraction of
$\langle \widetilde{U_1},\dots,\widetilde{U_n}\rangle\cap \mathcal C$ to $P\times \{0\}$.

\end{proof}

Recall that  a space $X$ is said to be \emph{homogeneous} if for each pair of points $x,y\in X$ there is a homeomorphism $f:X\to X$ such that $f(x)=y$; $X$ is \emph{locally homogeneous} if for each pair of points $x,y\in X$ there exist open neighborhoods $U$ of $x$ and $V$ of $y$, and a homeomorphism $f:U\to V$ such that $f(x)=y$.

\begin{proposition}\label{pr1}
The hyperspaces $\mathcal S(\R^2)$ and $\mathcal S_P(\R^2)$ are homogeneous and arcwise connected.
\end{proposition}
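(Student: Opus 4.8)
The plan is to extract both conclusions from a single geometric input: an \emph{ambient isotopy} of the plane carrying the standard polygon $P$ onto a prescribed curve. Precisely, I would first establish the claim that for every $C\in\mathcal S(\R^2)$ there is an isotopy $H\colon\R^2\times[0,1]\to\R^2$, $H(\cdot,t)=h_t$, with $h_0=\id$ and $h_1(P)=C$, and that $H$ may be taken to be a PL-isotopy (each $h_t$ a PL-homeomorphism) whenever $C$ is a polygon. Granting this, homogeneity follows at once: given $C_1,C_2$, choose such isotopies with $h^{(1)}_1(P)=C_1$ and $h^{(2)}_1(P)=C_2$, and set $g=h^{(2)}_1\circ(h^{(1)}_1)^{-1}$. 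Then $g$ is a (PL-)homeomorphism of $\R^2$ with $g(C_1)=C_2$, its induced map $\mathcal G\colon C(\R^2)\to C(\R^2)$ is a homeomorphism (with inverse induced by $g^{-1}$), and $\mathcal G$ carries $\mathcal S(\R^2)$ onto itself (respectively $\mathcal S_P(\R^2)$ onto itself, since PL-homeomorphisms send simple closed polygons to simple closed polygons) with $\mathcal G(C_1)=C_2$. Arcwise connectedness follows just as quickly: the map $t\mapsto h_t(P)$ is continuous into the metrizable hyperspace (as $H$ is jointly continuous and $P$ is compact), hence a path in $\mathcal S(\R^2)$ (respectively $\mathcal S_P(\R^2)$, using that each $h_t$ is PL) from $P$ to $C$. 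Since every element is joined to the single point $P$, the space is path-connected, and path-connectedness in a Hausdorff space yields arcwise connectedness.

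It remains to prove the isotopy claim, which is the core of the argument. The mere existence of a (PL-)homeomorphism $h_1$ of $\R^2$ with $h_1(P)=C$ is the Schoenflies theorem in the topological case and its two-dimensional PL analogue in the polygonal case. To upgrade such a homeomorphism to an isotopy to the identity, I would first reduce to the orientation-preserving case: if $h_1$ reverses orientation, precompose it with a reflection $r$ fixing the regular polygon $P$ setwise (so $r(P)=P$ and $h_1\circ r$ still sends $P$ to $C$), and work with $h_1\circ r$ instead. Next I would arrange, by composing with a suitable translation and scaling and using that $C\cup P$ is compact, that $h_1$ is the identity outside a large disk $B\supset C\cup P$; this compactly supported normalization is available in both the topological and PL categories precisely because $h_1$ is now orientation-preserving. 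Finally, after an affine change of coordinates taking $B$ to the unit disk, I would apply the Alexander isotopy trick, $h_t(x)=t\,h_1(x/t)$ for $|x|\le t$ and $h_t(x)=x$ otherwise, which is a (PL-)isotopy with $h_0=\id$ and $h_1$ as given, fixing everything outside $B$; the trace $t\mapsto h_t(P)$ then runs from $P$ to $C$.

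The main obstacle is exactly this passage from a homeomorphism to an isotopy — the isotopy form of Schoenflies, and its PL refinement in the polygonal case — rather than the soft hyperspace bookkeeping, which is routine. The Alexander trick disposes of it cleanly once the homeomorphism is orientation-preserving and supported in a disk, so the genuine input is reduced to (i) the classical Schoenflies and PL-Schoenflies theorems and (ii) the standard fact that compactly supported, orientation-preserving (PL-)homeomorphisms of $\R^2$ are (PL-)isotopic to the identity. As a consistency check I would note that Theorem~\ref{t1}, applied to the PL-topological families $\mathcal S(\R^2)$ and $\mathcal S_P(\R^2)$ (each closed under PL-homeomorphic images and containing $P$), gives strong local contractibility at $P$; combined with the homogeneity just proved, this shows these hyperspaces are locally arcwise connected, so that arcwise connectedness is equivalent to connectedness. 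The explicit isotopy above, however, already exhibits the stronger property of path-connectedness directly.
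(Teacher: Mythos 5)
Your top-level argument coincides with the paper's: the paper's entire proof is to cite the classical theorem (from Keldy\v{s}'s book) that any two planar simple closed curves (simple closed polygons) are ambient isotopic (PL-isotopic) in $\R^2$, and then, exactly as you do, to read off arcwise connectedness from the induced path in the hyperspace and homogeneity from the homeomorphism induced by the time-one map. Your hyperspace bookkeeping --- induced homeomorphisms preserve $\mathcal S(\R^2)$, resp.\ $\mathcal S_P(\R^2)$; joint continuity of the isotopy plus compactness of $P$ gives continuity of $t\mapsto h_t(P)$ in the Vietoris topology; and path-connectedness in a metrizable space upgrades to arcwise connectedness --- is correct and in fact more careful than the paper's two-sentence treatment.

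The difference is that you attempt to prove the classical isotopy theorem instead of citing it, and there one step fails as stated: you cannot arrange that an orientation-preserving homeomorphism $h_1$ of $\R^2$ be the identity outside a large disk ``by composing with a suitable translation and scaling.'' If $h_1$ is, say, a nontrivial rotation about the origin, then every composite of $h_1$ with translations and scalings is affine, and an affine map that equals the identity on a nonempty open set is the identity; so no such composite is compactly supported, even though a compactly supported homeomorphism carrying $P$ onto $h_1(P)$ certainly exists. The passage from ``orientation-preserving $h_1$ with $h_1(P)=C$'' to ``compactly supported $h_1'$ with $h_1'(P)=C$'' is genuine content, not a normalization: one standard route is to take a round circle $S$ bounding a disk $B\supset C\cup P$, observe via Schoenflies that the closed regions between $P$ and $S$ and between $C$ and $S$ are closed annuli, extend an orientation-compatible homeomorphism $P\to C$ (adjusting by a reflection of $P$ if necessary) over these annuli so that it is the identity on $S$, glue with a Schoenflies extension over the disks bounded by $P$ and $C$, and extend by the identity outside $B$. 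With that step repaired --- or simply replaced by the citation, as the paper does --- your orientation reduction via a reflection fixing $P$ and the Alexander trick do correctly complete the argument in both the topological and PL categories.
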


\begin{proof}
By the classical result of the plane topology, every two planar simple closed curves (simple closed polygons)  $C_1, C_2$ are isotopic by an ambient isotopy (PL-isotopy) of the plane (see~\cite{Ke}). The induced isotopy defines  an arc between  $C_1, C_2$ in the respective hyperspace $\mathcal S(\R^2)$ and $\mathcal S_P(\R^2)$ as well as a homeomorphism of the hyperspace that maps $C_1$ onto $C_2$.
\end{proof}

\begin{proposition}\label{pr2}
The hyperspaces $\mathcal K_T $ and $\mathcal K_P $ are locally homogeneous.
\end{proposition}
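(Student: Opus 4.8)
The plan is to prove something slightly stronger than local homogeneity: that a suitable neighborhood of \emph{any} point of $\mathcal K_P$ (of $\mathcal K_T$) is homeomorphic, by a homeomorphism carrying that point to $P\times\{0\}$, to a single fixed ``model'' neighborhood of $P\times\{0\}$. Local homogeneity then follows immediately by composition: given $C_1,C_2$ in the hyperspace, map a neighborhood of $C_1$ onto the model and the model onto a neighborhood of $C_2$, obtaining a homeomorphism of neighborhoods taking $C_1$ to $C_2$. For $\mathcal K_T$ I would first reduce to the case of a polygonal knot: if $C$ is tame, choose an autohomeomorphism $g$ of $\R^3$ with $g(C)$ polygonal; the induced map $\mathcal G\colon C(\R^3)\to C(\R^3)$ is a homeomorphism, and since tameness is preserved by autohomeomorphisms of $\R^3$ one has $\mathcal G(\mathcal K_T)=\mathcal K_T$ and $\mathcal G(C)=g(C)$. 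Thus $\mathcal G$ carries a neighborhood of $C$ in $\mathcal K_T$ onto a neighborhood of the polygonal knot $g(C)$, and it suffices to construct the model homeomorphism at polygonal knots. (For $\mathcal K_P$ every point is already polygonal, so no reduction is needed.)

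So fix a polygonal knot $C_0$ and choose a regular neighborhood $N$ of $C_0$, i.e. an open solid torus in $\R^3$ having $C_0$ as a core. The pair $(N,C_0)$ and the pair $(\widetilde U, P\times\{0\})$ are both (solid torus, PL core)-pairs, since $\widetilde U=U\times(-\frac1n,\frac1n)$ is an open solid torus with core the polygon $P\times\{0\}$; hence there is a PL-homeomorphism $\phi\colon N\to\widetilde U$ with $\phi(C_0)=P\times\{0\}$. Put $V_k=\phi^{-1}(\widetilde{U_k})$ for $k=1,\dots,n$. Because $\bigcup_k V_k\subset N$, every continuum in $\langle V_1,\dots,V_n\rangle$ is contained in $N$, so the induced homeomorphism $\mathcal\Phi\colon C(N)\to C(\widetilde U)$ restricts to a homeomorphism carrying the neighborhood $\langle V_1,\dots,V_n\rangle$ of $C_0$ in $C(\R^3)$ onto the neighborhood $\langle\widetilde{U_1},\dots,\widetilde{U_n}\rangle$ of $P\times\{0\}$ in $C(\R^3)$, with $\mathcal\Phi(C_0)=P\times\{0\}$.

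What remains is to check that $\mathcal\Phi$ respects the family $\mathcal C\in\{\mathcal K_P,\mathcal K_T\}$, that is, that it restricts further to a homeomorphism of $\langle V_1,\dots,V_n\rangle\cap\mathcal C$ onto $\langle\widetilde{U_1},\dots,\widetilde{U_n}\rangle\cap\mathcal C$. For $\mathcal C=\mathcal K_P$ this is immediate, since a PL-homeomorphism and its inverse carry simple closed polygons to simple closed polygons. The case $\mathcal C=\mathcal K_T$ is the crux, and I expect it to be the main obstacle, because $\phi$ in general does \emph{not} extend to an autohomeomorphism of $\R^3$: the solid tori $N$ and $\widetilde U$ may be knotted differently in $\R^3$ when $C_0$ is a nontrivial knot. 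The way around this is that tameness is a local property. A simple closed curve in $\R^3$ is tame if and only if it is locally tame at each of its points, and local tameness at a point is witnessed by a homeomorphism of a small neighborhood onto a standard ball taking the curve to a polygonal arc; precomposing such a witness with $\phi^{-1}$ shows that $\phi$ and $\phi^{-1}$ preserve local tameness, hence carry tame knots to tame knots. This yields $\mathcal\Phi(\langle V_1,\dots,V_n\rangle\cap\mathcal K_T)=\langle\widetilde{U_1},\dots,\widetilde{U_n}\rangle\cap\mathcal K_T$.

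Finally I would assemble the pieces. For $\mathcal K_P$, composing the model homeomorphisms for two polygonal knots $C_1,C_2$ gives a homeomorphism of neighborhoods taking $C_1$ to $C_2$. For $\mathcal K_T$, given two tame knots, I would first apply the reductions $\mathcal G_1,\mathcal G_2$ to reach polygonal knots, then the two model homeomorphisms $\mathcal\Phi_1,\mathcal\Phi_2$, and compose them all; the resulting homeomorphism between neighborhoods sends one tame knot to the other. This establishes local homogeneity of both $\mathcal K_T$ and $\mathcal K_P$.
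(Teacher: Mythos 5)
Your proposal is correct and takes essentially the same approach as the paper: produce, for each knot, a homeomorphism from a neighborhood of it in $\R^3$ onto the model pair $(\widetilde{U}, P\times\{0\})$, pass to the induced hyperspace homeomorphism onto $\langle \widetilde{U_1},\dots,\widetilde{U_n}\rangle\cap\mathcal C$, and compose two such maps to carry one knot to the other. The only difference is that the paper cites the solid-torus neighborhood characterization of tame (and of polygonal) knots as a known fact, whereas you re-derive it from the reduction to the polygonal case, regular neighborhood theory, and the Bing--Moise theorem that locally tame curves are tame --- the same mathematical content, unpacked.
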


\begin{proof}
Recall that
\begin{multline}\label{k1}
\text{a knot $C$ is tame if and only if there exist  a neighborhood $W$ of $C$}\\ \text{and a homeomorphism  $f:W\to \widetilde{U}$ such that $f(C)=P\times \{0\}$}.
\end{multline}

A counterpart of~\eqref{k1} in the PL-category runs as follows.
\begin{multline}\tag{\ref{k1}$_{PL}$}\label{k2}
\text{$C\in \mathcal K_P$ if and only if there exist  a polyhedral neighborhood}\\ \text{$W$ of $C$  and a PL-homeomorphism  $f:W\to \widetilde{U}$ such that $f(C)=P\times \{0\}$}.
\end{multline}

Let $\overline{f}$ be the induced homeomorphism defined on the hyperspace $C(W)$. Then the preimages $ \overline{f}^{-1}(\langle \widetilde{U_1},\dots,\widetilde{U_n}\rangle\cap \mathcal K_T)$ and $ \overline{f}^{-1}(\langle \widetilde{U_1},\dots,\widetilde{U_n}\rangle\cap \mathcal K_P)$ are open neighborhoods of $C\in \mathcal K_T$ and of $C\in \mathcal K_P$, respectively, such that $\overline{f}(C)= P\times \{0\}$.
\end{proof}

\

\begin{proposition}\label{pr3} Hyperspaces $\K$, $\K_P$, and $\K_T$ are arcwise connected.
\end{proposition}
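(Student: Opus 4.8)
The plan is to prove \emph{path}-connectedness by joining an arbitrary element of each hyperspace to the single fixed continuum $P\times\{0\}$, and then to invoke the standard fact that a path-connected metrizable space is arcwise connected. Since $P\times\{0\}$ is simultaneously polygonal, tame, and a knot, it serves as a common target for $\K_P$, $\K_T$, and $\K$ at once. The arc joining a given $C$ to $P\times\{0\}$ will be assembled from two stages: a ``positioning'' arc that slides $C$ into the standard annular neighborhood, followed by the contracting arc supplied by Theorem~\ref{t1}.

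For the second stage I would observe that each of the families $\K$, $\K_P$, $\K_T$ is PL-topological and contains $P\times\{0\}$, so Theorem~\ref{t1} applies with $\mathcal C$ equal to the respective family and yields the homotopy $\widetilde{\mathcal H}$ that contracts the neighborhood $N:=\langle \widetilde{U_1},\dots,\widetilde{U_n}\rangle\cap\mathcal C$ inside itself to $P\times\{0\}$. Hence, for every $C'\in N$ the track $s\mapsto \widetilde{\mathcal H}(C',s)$ is an arc in $\mathcal C$ running from $C'$ to $P\times\{0\}$. It is essential here that this contraction is insensitive to the knot type of $C'$: its third move $H_3$ radially flattens the relevant continua onto the planar polygon $P$, so that a knotted, indeed even a wild, $C'$ lying in $N$ is carried to the unknotted $P\times\{0\}$. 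Thus the entire type-changing burden is absorbed by Theorem~\ref{t1}, and this route sidesteps the need for any local contractibility statement at wild knots.

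The first stage is where the real work lies, and I expect it to be the main obstacle. Given $C\in\mathcal C$ I must exhibit an arc in $\mathcal C$ from $C$ to some $C'\in N$; equivalently, an ambient isotopy of $\R^3$, PL when $\mathcal C=\K_P$, carrying $C$ into $\widetilde U$ so that the image meets every sector $\widetilde{U_k}$. I would build it by first shrinking $C$ isotopically into a thin tubular neighborhood of a short arc, thereby confining all of the (possibly wild) knotting of $C$ to an arbitrarily small sub-tube, and then laying that tube once around the core circle of the solid-torus-like region $\widetilde U$; the resulting $C'$ lies in $\widetilde U$ and sweeps through all $n$ sectors, hence belongs to $N$. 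Such an isotopy is ambient and keeps $C'$ in the same class as $C$, PL-ness being preserved in the polygonal case and tameness in the tame case, so the induced track $s\mapsto g_s(C)$ is the desired positioning arc. The delicate points are verifying that the tube can genuinely be wound inside $\widetilde U$ while meeting each $\widetilde{U_k}$, and, in the wild case, that the shrinking step is realized by a legitimate ambient isotopy rather than a mere Hausdorff limit.

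Concatenating the positioning arc with the contracting arc joins every $C$ to $P\times\{0\}$; since $\K$, $\K_P$, and $\K_T$ are metrizable, path-connectedness upgrades to arcwise connectedness, settling all three cases uniformly.
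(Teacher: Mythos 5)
Your overall skeleton (move $C$ into the basic neighborhood $N=\langle \widetilde{U_1},\dots,\widetilde{U_n}\rangle\cap\mathcal C$ by an ambient isotopy, then follow the contraction of Theorem~\ref{t1} down to $P\times\{0\}$, then upgrade paths to arcs) is exactly the paper's, and your second stage is correct as stated: $\K$, $\K_P$, $\K_T$ are PL-topological families containing $P\times\{0\}$, and the contraction track does the type-changing for free. The genuine gap is in your first stage. Your positioning move rests on ``confining all of the (possibly wild) knotting of $C$ to an arbitrarily small sub-tube'' and then winding the tube around the core of $\widetilde U$. For knots that are wild at \emph{every} point (such knots exist, e.g.\ Bing's everywhere-wild simple closed curves), this confinement is impossible: ambient homeomorphisms preserve local wildness, so no ambient isotopy can produce an image that is a small tangle completed by a nice arc; every sub-arc of the image remains wild. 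If, alternatively, you read your construction literally --- shrink \emph{all} of $C$ into a tiny sub-tube (which scaling does achieve) and then lay the ambient tube once around the core --- then the image of $C$ occupies only a small portion of the wound tube and there is no reason for it to meet every sector $\widetilde{U_k}$; so it need not land in $N$ at all. Either way the positioning arc is not produced, and you flagged this yourself as the ``delicate point'' without resolving it.

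The missing idea, which is how the paper handles it, is that \emph{no winding around the torus is needed}. The set $Q'=\widetilde{U_2}\cup\widetilde{U_3}\cup\dots\cup\widetilde{U_n}$ (the tube with one sector removed) is an open polyhedron PL-homeomorphic to an open cube $Q=(-M,M)^3$ containing $C$. Take a PL-isotopic deformation $I:\R^3\times[0,1]\to\R^3$ expanding $Q$ onto $Q'$, arranged (after a preliminary stretch of $C$ inside $Q$) so that $I(C\times\{1\})$ meets both end overlaps $\widetilde{U_1}\cap\widetilde{U_2}$ and $\widetilde{U_1}\cap\widetilde{U_n}$. Since $I(C\times\{1\})$ is a continuum lying in the linear chain $\widetilde{U_2},\dots,\widetilde{U_n}$ and touching both of its ends, it must meet every link, hence $I(C\times\{1\})\in\langle \widetilde{U_1},\dots,\widetilde{U_n}\rangle$. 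This argument uses nothing about $C$ beyond its being a continuum inside $Q$, so it applies uniformly to polygonal, tame, and arbitrarily wild knots, and the isotopy is PL, so each of the three families is preserved along the track. With this replacement for your first stage, the rest of your argument goes through.
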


\begin{proof}
By Theorem~\ref{t1}, it is enough to exhibit, for any knot $C$, a PL-isotopy  on $\R^3$ transferring $C$ into $\langle \widetilde{U_1},\dots,\widetilde{U_n}\rangle$.

There exists an open cube $Q=(-M,M)^3$ containing $C$.
Note that $Q'=\widetilde{U_2}\cup \widetilde{U_3}\cup\ldots\cup\widetilde{U_n}$ is an open polyhedron homeomorphic to $Q$ and there is a PL-isotopic deformation $I:\R^3\times [0,1]\to \R^3$ expanding $Q$ to whole $Q'$
(i.e. $I( Q\times\{1\})=Q'$). Moreover, we can assume that
$$I(C\times\{1\})\cap (\widetilde{U_1}\cap \widetilde{U_2})\not= \emptyset \not=I(C\times\{1\})\cap (\widetilde{U_1}\cap \widetilde{U_n}).$$
By the connectedness, $I(C\times\{1\})$ intersects all the sets $\widetilde{U_k}$, hence
$$I(C\times\{1\})\in \langle \widetilde{U_1},\dots,\widetilde{U_n}\rangle.$$

\end{proof}

Theorem~\ref{t1} applies to families $\mathcal C\in\{\mathcal S_P(\R^2), \mathcal S(\R^2), \K_P, \K_T\}$. Hence, by Propositions~\ref{pr1},~\ref{pr2},~\ref{pr3}, we get our main result in this section.

\begin{theorem}\label{t2}
Each of the hyperspaces $\mathcal S_P(\R^2)$, $\mathcal S(\R^2)$, $\K_P$, and $\K_T$ is strongly locally contractible and arcwise connected.
\end{theorem}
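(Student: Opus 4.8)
The plan is to assemble Theorem~\ref{t2} as an immediate consequence of the machinery already developed in this section, so the work is one of bookkeeping rather than new construction. First I would verify that each of the four families $\mathcal C\in\{\mathcal S_P(\R^2), \mathcal S(\R^2), \K_P, \K_T\}$ is PL-topological in the sense defined before Theorem~\ref{t1}, i.e. closed under images by PL-homeomorphisms of the ambient Euclidean space. For $\mathcal S_P(\R^2)$ and $\K_P$ this is essentially definitional, since a PL-homeomorphism carries a simple closed polygon to a simple closed polygon; for $\mathcal S(\R^2)$ it holds because PL-homeomorphisms are in particular homeomorphisms and so preserve being a simple closed curve; and for $\K_T$ it follows from the very definition of tameness (tameness is invariant under autohomeomorphisms of $\R^3$, hence a fortiori under PL-homeomorphisms). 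Each family plainly contains the relevant base point $P$ or $P\times\{0\}$.

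Once PL-topologicality is in hand, Theorem~\ref{t1} shows directly that the basic Vietoris neighborhood $\langle U_1,\dots,U_n\rangle\cap\mathcal C$ of $P$ (respectively $\langle\widetilde{U_1},\dots,\widetilde{U_n}\rangle\cap\mathcal C$ of $P\times\{0\}$) is contractible in itself to the base point. Since these neighborhoods form a local base at $P$ (respectively $P\times\{0\}$) in $C(\C)$ (respectively $C(\R^3)$), as recorded in the excerpt, they restrict to a neighborhood basis at the base point in $\mathcal C$ consisting of open contractible sets. This gives strong local contractibility at the single distinguished point $P$ or $P\times\{0\}$, so the next step is to propagate this to every point of the hyperspace. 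Here I invoke homogeneity and local homogeneity: Proposition~\ref{pr1} gives homogeneity of $\mathcal S(\R^2)$ and $\mathcal S_P(\R^2)$, while Proposition~\ref{pr2} gives local homogeneity of $\K_T$ and $\K_P$. In each case a homeomorphism (global or local) carrying an arbitrary point $C$ to the base point transports the contractible neighborhood basis back to $C$, yielding a contractible open neighborhood basis at $C$ as well. Thus all four hyperspaces are strongly locally contractible at every point.

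For arcwise connectedness I would simply cite the propositions already proved: Proposition~\ref{pr1} asserts arcwise connectedness of $\mathcal S(\R^2)$ and $\mathcal S_P(\R^2)$ directly, and Proposition~\ref{pr3} does the same for $\K$, $\K_P$, and $\K_T$. Together these cover all four spaces named in the theorem. Combining strong local contractibility with arcwise connectedness completes the proof.

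The only genuinely nonroutine point is the propagation from a single base point to all points via (local) homogeneity, and I expect this to be the main thing requiring care: one must confirm that the homeomorphism furnished by Proposition~\ref{pr1} or~\ref{pr2} really does carry a \emph{neighborhood basis of contractible open sets} to another such basis, which is automatic for global homeomorphisms but requires noting, in the locally homogeneous case of $\K_T$ and $\K_P$, that the local homeomorphism is defined on an open set and hence maps small enough contractible open neighborhoods homeomorphically. Everything else is a direct appeal to Theorem~\ref{t1} and the three preceding propositions.
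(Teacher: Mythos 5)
Your proposal is correct and follows essentially the same route as the paper: the paper's own (very terse) proof likewise observes that Theorem~\ref{t1} applies to the four PL-topological families and then combines it with Propositions~\ref{pr1},~\ref{pr2}, and~\ref{pr3} to transfer the contractible basic neighborhoods from the base point $P$ (or $P\times\{0\}$) to all points and to obtain arcwise connectedness. Your write-up merely makes explicit the bookkeeping (PL-topologicality and the propagation via homogeneity/local homogeneity) that the paper leaves implicit.
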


\section{Borel complexity}
\begin{theorem}\label{t4}
Each of the hyperspaces $\mathcal S_P(\R^2)$ and $\mathcal K_P$ is an  $F_\sigma$ and not $G_\delta$-subset of $C(\R^2)$ and  $C(\R^3)$, respectively.
\end{theorem}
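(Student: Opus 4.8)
The plan is to establish both assertions by a single scheme, carried out first for $\mathcal S_P(\R^2)\subset C(\R^2)$ and then transferred verbatim to $\K_P\subset C(\R^3)$. For the $F_\sigma$ bound I would write $\mathcal S_P(\R^2)=\bigcup_{k\in\N}\mathcal F_k$, where $\mathcal F_k$ consists of those simple closed polygons $C$ admitting a vertex description with at most $k$ edges subject to four uniform geometric constraints: $C\subset\cl B(0,k)$, every edge has length $\geq\tfrac1k$, the angle between the two edges at each vertex lies in $[\tfrac1k,\pi]$, and any two non-adjacent edges are at distance $\geq\tfrac1k$. Every simple closed polygon meets all four conditions once $k$ is large enough, so the union is all of $\mathcal S_P(\R^2)$.

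The key step is that each $\mathcal F_k$ is closed in $C(\R^2)$. Given a Vietoris-convergent sequence $C_\nu\to C$ with $C_\nu\in\mathcal F_k$, I would pass to a subsequence fixing the combinatorial type (a common edge number $k'\leq k$) and, using $C_\nu\subset\cl B(0,k)$, making each vertex $v_i^\nu\to v_i$, so that $C=\bigcup_i[v_i,v_{i+1}]$. The lower bound on edge length keeps adjacent vertices distinct; the angle bound $\geq\tfrac1k$ forbids two adjacent edges from folding back onto one another (an angle tending to $\pi$ merely merges two collinear edges, lowering the count while keeping the polygon simple); and the bound $\geq\tfrac1k$ on distances between non-adjacent edges passes to the limit, so no remote self-intersection can appear. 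Hence $C$ is again a simple closed polygon obeying all four bounds, i.e.\ $C\in\mathcal F_k$, and $\mathcal S_P(\R^2)$ is $F_\sigma$.

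For ``not $G_\delta$'' I would argue by Baire category inside the closure $F=\cl\,\mathcal S_P(\R^2)$ taken in $C(\R^2)$. Since $C(\R^n)$ is completely metrizable (being a closed subset of the Polish hyperspace of compacta), $F$ is a nonempty Baire space in which $\mathcal S_P(\R^2)$ is dense by construction. Each $\mathcal F_k$ is closed and has empty interior in $F$: inside any basic neighborhood $\langle U_1,\dots,U_r\rangle\cap F$ of a member of $\mathcal F_k$ one can first approximate by a fine simple closed polygon and then insert a small zig-zag on one of its edges, producing a polygon with more than $k$ edges that still lies in the neighborhood but not in $\mathcal F_k$. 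Thus $\mathcal S_P(\R^2)=\bigcup_k\mathcal F_k$ is meager in $F$. Were $\mathcal S_P(\R^2)$ a $G_\delta$-subset of $C(\R^2)$, it would be a $G_\delta$-subset of $F$, hence a dense $G_\delta$, hence comeager in $F$; but a nonempty Baire space admits no subset that is simultaneously meager and comeager, a contradiction.

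The main obstacle I expect is precisely the closedness of $\mathcal F_k$: one must verify that the four uniform bounds jointly rule out every degeneration of a limit of simple closed polygons---collapse of an edge, sharp folding at a vertex, and collision of non-adjacent edges---while still permitting the harmless merging of collinear edges. Once this is settled, the statement for $\K_P$ follows by repeating the decomposition with $\cl B(0,k)\subset\R^3$ and the same metric constraints; here a limit may change the knot type, but this is irrelevant since a polygonal knot is by definition merely a simple closed polygon in $\R^3$, and the zig-zag construction again shows that each analogous $\mathcal F_k$ is nowhere dense in $\cl\,\K_P$.
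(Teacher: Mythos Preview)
Your argument is correct. For the $F_\sigma$ bound the paper does essentially the same thing: it writes $\mathcal S_P(\R^2)=\bigcup_{k\ge 4,\,l\ge 1}K_{k,l}$, where $K_{k,l}$ fixes the number $k$ of edges together with a lower bound $1/l$ on the distance between non-adjacent edges, and notes that each $K_{k,l}$ is $F_\sigma$ (in fact closed, by exactly your limiting-vertex argument). Your three extra constraints---global boundedness, minimal edge length, minimal vertex angle---are harmless but unnecessary: a fold-back at a vertex already brings the two once-removed edges into contact, and a collapsed edge does the same, so the single non-adjacent-distance bound already rules out every degeneration. Where the two proofs genuinely diverge is the ``not $G_\delta$'' half. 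The paper invokes a forward reference (Theorem~\ref{t3}): $\mathcal S_P(\R^2)$ contains a \emph{closed} copy of $\sigma$, and since $\sigma$ is not Polish, $\mathcal S_P(\R^2)$ cannot be $G_\delta$ in the Polish space $C(\R^2)$. Your Baire-category route is more self-contained---the zig-zag perturbation shows each $\mathcal F_k$ is nowhere dense in $F=\cl\mathcal S_P(\R^2)$, so $\mathcal S_P(\R^2)$ is a dense meager subset of the Baire space $F$ and therefore not $G_\delta$ there---and is in effect a direct exploitation of the observation recorded in the paper as Remark~\ref{re}. The trade-off is that the paper's explicit $\sigma$-embedding is reused later to show that $\mathcal S_P(\R^2)$ and $\K_P$ are infinite-dimensional Cantor manifolds, so that investment pays twice; your argument is quicker and independent of Section~4, but it does not feed those later results.
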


\begin{proof}
 We show the theorem for $\mathcal S_P(\R^2)$, the proof being similar for $\mathcal K_P$.
Note that the family of closed segments, including  singletons, in $\R^2$  is a closed subset of $C(\R^2)$. For $A,B \in C(\R^2)$, denote $\rho(A,B)=\min\{||a-b||: a\in A, b\in B\}$.

The following formula describes simple closed polygons  as continua which are  unions of finite circular chains of segments   and  do not contain triods.
\begin{multline}\label{k2}\mathcal S_P(\R^2)= \bigcup_{k\ge 4, l\ge 1} K_{k,l}, \quad\text{where, for $k\ge 4, l\ge 1$,}\\
K_{k,l} =\{C\in C(\R^2): \exists\, x_1,\ldots,x_k\in\R^2 \quad C=[x_k,x_1]\cup\bigcup_{i=1}^{k-1}\,[x_i,x_{i+1}]\quad\&\\
    \forall\, i,j\in \{1,\ldots,k\}\ \text{if $1<|i-j|<k-1$, then $\rho([x_i,x_{i+1}],[x_j,x_{j+1}])\ge \frac1{l}$}\}.
 \end{multline}
The Borel class  of   $K_{k,l}$  is $F_\sigma$ in $C(\R^2)$, whence $\mathcal S_P(\R^2)$ is $F_\sigma$. Since $\mathcal S_P(\R^2)$ contains a closed copy of  $\sigma$ (see Theorem~\ref{t3}) and $\sigma$  is not a $G_\delta$-subset of the Hilbert cube, $\mathcal S_P(\R^2)$ is not $G_\delta$ either.
\end{proof}

\begin{remark}\label{re}
One can easily observe that each   $C\in K_{k,l}$ in~\eqref{k2} can be approximated  by simple closed polygons from other sets $K_{k',l'}$ with $l'>l$. Thus, $\mathcal S_P(\R^2)$ and $\mathcal K_P$ are of the first category in itself.
\end{remark}

\

Let $G$ be the Polish group of autohomeomorphisms of  $\R^3$ with the compact-open topology. We will consider the continuous action of $G$   on $\K_T$ by induced homeomorphisms. The following two facts are known (see~\cite{MO1}), nevertheless we suggest an alternative elementary argument, due to A. Hohti, for the first one without referring to knot diagrams.
\begin{fact}\label{f1}
The  action of $G$ on $\K_T$ has countably many orbits.
\end{fact}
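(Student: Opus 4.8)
The plan is to identify the $G$-orbits on $\K_T$ with ambient homeomorphism classes of knots and then to exhibit a countable set of representatives. First I would observe that $C_1,C_2\in\K_T$ lie in one orbit exactly when some $h\in G$ satisfies $h(C_1)=C_2$, i.e.\ when $C_1$ and $C_2$ are equivalent knots, and that the orbit of a tame knot stays inside $\K_T$ (if $g(C)$ is polygonal and $C'=h(C)$, then $gh^{-1}\in G$ carries $C'$ onto the same polygonal knot). By the definition of tameness every $C\in\K_T$ is taken onto a polygonal knot by some $h\in G$, so every orbit meets $\K_P$. It therefore suffices to prove that $\K_P$ splits into countably many orbits.

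Next I would reduce each polygonal knot to one with rational vertices without leaving its orbit. Let $C\in\K_P$ have cyclically ordered vertices $v_1,\dots,v_k$. Since $C$ is simple, non-adjacent edges are disjoint, hence at positive distance; put $\delta=\min\dist([v_i,v_{i+1}],[v_j,v_{j+1}])$ over non-adjacent pairs $i,j$. Pick $w_1,\dots,w_k\in\Q^3$ with $\|w_i-v_i\|<\delta/4$, chosen small enough to preserve \emph{general position} (no three consecutive vertices becoming arranged so that adjacent edges overlap). Because every point of an edge moves less than $\delta/4$ along the straight-line homotopy $v_i(s)=(1-s)v_i+sw_i$, the non-adjacent edges remain at distance at least $\delta/2$ for all $s\in[0,1]$, while adjacent edges keep meeting only at their common moving vertex; thus each $C_s$ is a simple closed polygon and $s\mapsto C_s$ is a PL isotopy of polygons from $C$ to a rational polygon $C^\ast$.

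Then I would make this motion ambient. By the PL isotopy extension theorem the isotopy $C_s$ extends to a PL ambient isotopy $H_s$ of $\R^3$ with $H_0=\id$ and $H_1(C)=C^\ast$; one can also build such an $H_s$ by hand, pushing along a regular neighborhood of the moving $1$-complex, so that no appeal to knot diagrams is made. In either case $H_1\in G$ and $C$ lies in the orbit of the rational polygon $C^\ast$. Finally I would count: a polygonal knot with rational vertices is determined by a finite tuple in $\bigcup_{k\ge 3}(\Q^3)^k$ forming a simple closed polygon, and this union is countable, so there are only countably many such knots and hence only countably many orbits meeting $\K_P$. Together with the first step this yields countably many orbits on $\K_T$.

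I expect the main obstacle to be the stability step together with its ambient extension: one must check carefully that a sufficiently small simultaneous straight-line motion of all vertices keeps the polygon simple at every intermediate time, and that the resulting isotopy of a $1$-complex extends to the whole space. This is precisely the diagram-free content of the argument, and the quantitative choice of $\delta$ and of the rational approximations $w_i$ is its crux; the orbit identifications and the final counting are routine.
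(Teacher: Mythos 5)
Your proposal is correct and follows essentially the same route as the paper: the paper's (deliberately terse, diagram-free) proof likewise notes that every orbit in $\K_T$ meets $\K_P$ and that each polygonal knot can be carried by an ambient homeomorphism onto a polygonal knot with vertices in $\Q^3$, after which countability is immediate. Your quantitative vertex-perturbation and isotopy-extension argument simply supplies the details of the observation the paper leaves to the reader.
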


\begin{proof}
 Orbits of tame knots contain polygonal knots, so it suffices to observe that for each $C\in \K_P$ there is a homeomorphism $g:\R^3\to\R^3$ such that $g(C)$ is a polygonal knot with all vertices in $\Q^3$.

\end{proof}

Since, by~\cite{RN},  the orbit of each $C\in \K_P$ is a Borel subset of $C(\R^3)$ and $\K_T$ contains a closed copy of $\hat{c_0}$ (by Theorem~\ref{t3})  which is not a $G_{\delta\sigma}$-subset of the Hilbert cube,  the next fact follows.
\begin{fact}\label{t5}
$\mathcal K_T$ is a Borel subset of $C(\R^3)$ which is not $G_{\delta\sigma}$.
\end{fact}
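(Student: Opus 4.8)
The plan is to establish Fact~\ref{t5} by combining two structural inputs already prepared in the excerpt: a descriptive-set-theoretic upper bound on the complexity of $\K_T$ coming from the group action, and a lower bound coming from an embedded copy of $\hat{c_0}$. First I would argue that $\K_T$ is Borel. By Fact~\ref{f1}, the induced action of the Polish group $G=\mathrm{Homeo}(\R^3)$ on $\K_T$ has only countably many orbits, and $\K_T$ is precisely the union of the $G$-orbits of the polygonal knots; equivalently $\K_T=\bigcup\{G\cdot C: C\in\K_P,\ C\text{ has vertices in }\Q^3\}$, a countable union. By the cited result~\cite{RN} on orbits of Polish group actions, each such orbit $G\cdot C$ is a Borel subset of $C(\R^3)$. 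A countable union of Borel sets is Borel, so $\K_T$ is Borel in $C(\R^3)$.

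Next I would address the negative part, that $\K_T$ is not $G_{\delta\sigma}$. The idea is to invoke the forthcoming Theorem~\ref{t3}, which supplies a closed copy of $\hat{c_0}$ inside $\K_T$ (in fact inside every nonempty open subset, but one copy suffices here). The space $\hat{c_0}$ is homeomorphic to $c_0$ and is a genuine $F_{\sigma\delta}$-absorbing set; in particular it is known not to be a $G_{\delta\sigma}$-subset of the Hilbert cube, and more robustly it is not $G_{\delta\sigma}$ in any space containing it as a closed subset. I would then use the standard closure-permanence of the additive/multiplicative Borel classes: if $\K_T$ were $G_{\delta\sigma}$ in $C(\R^3)$, then its closed subset $\hat{c_0}$ would be the intersection of $\K_T$ with a closed set, hence $G_{\delta\sigma}$ in $C(\R^3)$, and restricting to the Hilbert cube containing the copy of $\hat{c_0}$ would force $\hat{c_0}$ to be $G_{\delta\sigma}$ there too, a contradiction.

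The main obstacle I anticipate is making the lower-bound argument fully rigorous at the level of Borel classes, namely the transfer of the non-$G_{\delta\sigma}$ property of $\hat{c_0}$ from an abstract homeomorphic model into the ambient hyperspace. This requires that ``$X$ is not $G_{\delta\sigma}$'' be a topological invariant insensitive to the surrounding space, which is the standard fact that the Borel class of a separable metrizable space is intrinsic: if $Z$ is a closed subset of a Polish space $Y$ and $Z$ is $G_{\delta\sigma}$ in $Y$, then $Z$ is an absolute $G_{\delta\sigma}$, hence $G_{\delta\sigma}$ when embedded anywhere as a closed set. I would cite the relevant absolute-Borel-class machinery rather than reprove it. Everything else---the Borel upper bound from countably many Borel orbits, and the permanence of $G_{\delta\sigma}$ under intersection with a closed set---is routine and can be stated in a sentence each, so the proof should remain short, essentially the three sentences already sketched in the excerpt with the permanence step spelled out.
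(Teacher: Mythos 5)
Your proposal is correct and follows exactly the paper's argument: Borelness of $\K_T$ comes from Fact~\ref{f1} together with the Ryll-Nardzewski result~\cite{RN} (each of the countably many orbits is Borel), and the failure of $G_{\delta\sigma}$ comes from the closed copy of $\hat{c_0}$ supplied by Theorem~\ref{t3}, which is not $G_{\delta\sigma}$ in the Hilbert cube. Your spelling out of the permanence and absoluteness steps (intersection with a closed set, plus the Lavrentiev-type invariance of Borel classes) is merely a more detailed rendering of what the paper leaves implicit.
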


It was mentioned in the Introduction that $\K$ is an $F_{\sigma\delta}$-subset of $C(\R^3)$.
\begin{corollary}
 $\mathcal K_W$ is   a Borel subset of $C(\R^3)$ which is not $F_{\sigma\delta}$.
\end{corollary}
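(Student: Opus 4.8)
The plan is to separate the two assertions. That $\mathcal K_W$ is Borel is immediate: since $\mathcal K_W=\K\setminus\mathcal K_T$, and $\K=\mathcal S(\R^3)$ is an $F_{\sigma\delta}$-subset of $C(\R^3)$ (hence Borel) while $\mathcal K_T$ is Borel by Fact~\ref{t5}, the difference $\mathcal K_W$ is a Borel subset of $C(\R^3)$.

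For the negative part I would \emph{not} argue by naive complementation. From $\mathcal K_W\in F_{\sigma\delta}$ one only gets $\mathcal K_T=\K\cap(C(\R^3)\setminus\mathcal K_W)$ as the intersection of an $F_{\sigma\delta}$ set with a $G_{\delta\sigma}$ set; since $\K$ itself is merely $F_{\sigma\delta}$ (and, as recalled in the Introduction, not $G_{\delta\sigma}$), this places $\mathcal K_T$ only in a strictly higher additive Borel class and yields no contradiction with Fact~\ref{t5}. Instead I would transfer the complexity through the Hilbert-cube embedding underlying Theorem~\ref{t3}. That construction provides a topological embedding $h\colon[0,1]^\N\to C(\R^3)$ whose image consists entirely of knots (so $h([0,1]^\N)\subseteq\K$) and for which a point $x=(x_j)$ is sent to a tame knot exactly when $x\in\hat{c_0}$; equivalently, $h^{-1}(\mathcal K_T)=\hat{c_0}$ and therefore $h^{-1}(\mathcal K_W)=[0,1]^\N\setminus\hat{c_0}$.

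Granting this, the argument is short. Suppose $\mathcal K_W$ were $F_{\sigma\delta}$ in $C(\R^3)$. By continuity of $h$, the preimage $h^{-1}(\mathcal K_W)=[0,1]^\N\setminus\hat{c_0}$ would be an $F_{\sigma\delta}$-subset of the Hilbert cube. But $\hat{c_0}\cong c_0$ is $F_{\sigma\delta}$-complete, so it is not $G_{\delta\sigma}$; dually, its complement $\{x\in[0,1]^\N:\limsup_j x_j>0\}$ is $G_{\delta\sigma}$-complete and in particular not $F_{\sigma\delta}$, a contradiction. Hence $\mathcal K_W$ is not $F_{\sigma\delta}$. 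This is the exact mirror of the proof of Fact~\ref{t5}, where the failure of $\mathcal K_T$ to be $G_{\delta\sigma}$ was read off from $\hat{c_0}=h^{-1}(\mathcal K_T)$ not being $G_{\delta\sigma}$.

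The main obstacle is verifying the two properties of $h$ claimed above, and this is where the knot theory enters rather than the descriptive set theory. One must arrange the embedding so that the whole cube lands in $\K$ and so that the tame locus is \emph{precisely} $\hat{c_0}$: the coordinates $x_j$ should control a null sequence of localized knotted gadgets strung along the curve, so that $x_j\to 0$ forces the limiting curve to be tamely embedded, while any sequence with $\limsup_j x_j>0$ accumulates infinitely many non-shrinking knotted features and hence produces a wild knot. Confirming that $x\notin\hat{c_0}$ genuinely yields wildness (and not merely a curve failing to be polygonal) is the delicate step; once the embedding of Theorem~\ref{t3} is known to realize this tame/wild dichotomy, the completeness of $c_0$ in the $F_{\sigma\delta}$ class closes the proof.
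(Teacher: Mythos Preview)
Your diagnosis of the naive complementation argument is correct, and the reduction strategy you outline is the right one in principle. However, there is a genuine gap: the embedding $\widetilde f$ actually constructed in Theorem~\ref{t3} does \emph{not} satisfy $\widetilde f\bigl([0,1]^\N\bigr)\subseteq\K$. Look at the construction: $\widetilde f(x)=f(x)\times\{0\}$, and $f(x)$ is obtained from the polygon $P$ by replacing one edge with an infinite zig--zag of triangular bumps whose $j$-th peak has height proportional to $x_j$. The paper states explicitly that $f(x)\in\mathcal S(\R^2)$ \emph{if and only if} $x\in\hat{c_0}$. When $x\notin\hat{c_0}$ the bumps do not shrink, and in the closure the accumulation vertex acquires a nondegenerate limit segment; the resulting continuum is not locally an arc there, so it is not a simple closed curve at all. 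Thus $\widetilde f^{-1}(\K)=\widetilde f^{-1}(\K_T)=\hat{c_0}$ and $\widetilde f^{-1}(\K_W)=\emptyset$, which says nothing about the complexity of $\K_W$.

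Your ``knotted gadgets'' modification is exactly what is needed to repair this: replace each triangular bump by a small genuinely knotted arc (say a trefoil pattern) of size governed by $x_j$, so that for $x\notin\hat{c_0}$ the image is still a simple closed curve but one with infinitely many non-shrinking local knots accumulating at a point, hence wild. This would give a continuous $h\colon[0,1]^\N\to\K$ with $h^{-1}(\K_W)=[0,1]^\N\setminus\hat{c_0}$, and then the $\Pi^0_3$-hardness of that complement finishes the proof. But note that this is a new construction, not the one in Theorem~\ref{t3}, and the wildness verification (e.g.\ via the fundamental group of the complement in a neighborhood of the bad point) is real work that still has to be done. The paper itself supplies no proof of the Corollary; read literally it seems to rely on the complementation you rightly rejected, so your sketch is in fact going beyond what the paper provides---it just is not yet complete.
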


Denote by $G^+$ ($G^+_{PL}$) the subgroup of $G$ consisting of orientation preserving autohomeomorphisms (PL-autohomeomorphisms).

Recall the following fact.

\begin{fact}\label{fact}
 Knots $C,D$ belong to the same orbit of the group $G^+$ ($G^+_{PL}$) if and only if $C$ and $D$ have the same isotopy type (PL-isotopy type).
\end{fact}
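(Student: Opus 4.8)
The statement to be proved, Fact~\ref{fact}, asserts the equivalence between lying in the same orbit of $G^+$ (respectively $G^+_{PL}$) and having the same isotopy type (respectively PL-isotopy type). The plan is to unwind the definitions on both sides and exhibit the two required implications. The essential bridge is the standard fact connecting \emph{isotopy} (a path in the homeomorphism group) with \emph{ambient isotopy} of the knot and with the action of orientation-preserving homeomorphisms; the content is really the classical statement that an ambient isotopy of $\R^3$ starting at the identity moves each knot within its orbit under $G^+$, together with the converse that any element of the identity component of $G^+$ arises this way.

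\textbf{Forward direction.} First I would assume $C$ and $D$ lie in the same $G^+$-orbit, so there is $g\in G^+$ with $g(C)=D$. Since $G^+$ is the group of orientation-preserving autohomeomorphisms of $\R^3$, and $\R^3$ is connected with $g$ orientation-preserving, $g$ lies in the path-component of the identity in $G^+$; concretely one uses that $G^+$ is pathwise connected (the orientation-preserving homeomorphism group of Euclidean space is connected). Choosing a path $t\mapsto g_t$ from $\id$ to $g$ inside $G^+$ gives an ambient isotopy $(x,t)\mapsto g_t(x)$ carrying $C=g_0(C)$ to $D=g_1(C)$. The induced continua $g_t(C)$ then trace out an isotopy of $C$ onto $D$, so $C$ and $D$ have the same isotopy type. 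In the PL-setting the same argument applies with $G^+_{PL}$ in place of $G^+$ and PL-paths in place of continuous ones, invoking the PL-connectedness of the orientation-preserving PL-homeomorphism group of $\R^3$.

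\textbf{Reverse direction.} Conversely, suppose $C$ and $D$ have the same isotopy type, i.e.\ there is an isotopy $H\colon S^1\times[0,1]\to\R^3$ of embeddings with $H_0(S^1)=C$ and $H_1(S^1)=D$. The classical isotopy-extension theorem (in the topological locally flat category for $G^+$, and in the PL category for $G^+_{PL}$) promotes this isotopy of the subspace to an \emph{ambient} isotopy $\Phi_t\colon\R^3\to\R^3$ with $\Phi_0=\id$ and $\Phi_t(C)=H_t(S^1)$ for all $t$. Then $\Phi_1\in G^+$ (it is orientation-preserving, being isotopic to the identity) and $\Phi_1(C)=D$, placing $C$ and $D$ in the same $G^+$-orbit. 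The PL case is identical, using PL isotopy extension and noting $\Phi_1\in G^+_{PL}$.

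\textbf{Main obstacle.} The only delicate point is the application of isotopy extension in the \emph{topological} category: isotopy extension for arbitrary (possibly wild) knots requires local flatness or an equivalent tameness hypothesis, which is exactly where one must be careful about the distinction between isotopy of the embedded circle and ambient isotopy. For this reason I would either restrict the topological statement to the situation where the isotopy is through locally flat embeddings, or cite the precise form of the isotopy-extension theorem that applies to the knots under consideration; in the PL category no such subtlety arises, as PL isotopy extension holds in full generality for compact polyhedra in a PL manifold. The remaining steps—connectedness of $G^+$ and $G^+_{PL}$ and the passage between a path in the group and an ambient isotopy—are routine.
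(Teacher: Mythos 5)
The paper itself gives no proof of this Fact: it is introduced with ``Recall the following fact'' and treated as a classical result of knot theory, so your attempt must be measured against the classical argument the paper implicitly invokes. Your forward direction is exactly that argument: given $g\in G^+$ with $g(C)=D$, one cites the theorem that every orientation-preserving homeomorphism of $\R^3$ is isotopic to the identity (nontrivial in the topological category, where it rests on Moise--Bing type results in dimension $3$; an Alexander-type argument in the PL category), and the path $g_t$ from $\id$ to $g$ is then an ambient isotopy carrying $C$ to $D$. That part is fine.

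The genuine gap is in your reverse direction, and it stems from misreading what ``isotopy type'' means here. In this paper the notion is \emph{ambient} isotopy: see Proposition~\ref{pr1} (``isotopic by an ambient isotopy''), and the way Fact~\ref{fact} is applied in the proof of Theorem~\ref{t3}, where it is used to produce an isotopy $I:\R^3\times[0,1]\to\R^3$ defined on the whole space. Under that reading the reverse implication is immediate and needs no machinery: the time-one map $\Phi_1$ of the ambient isotopy is orientation-preserving (being isotopic to $\id$) and satisfies $\Phi_1(C)=D$, so $C$ and $D$ lie in the same orbit. You instead take ``same isotopy type'' to mean the existence of a path of embeddings $H_t:S^1\to\R^3$ joining parametrizations of $C$ and $D$, and then try to upgrade it by isotopy extension. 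With that reading the implication you are trying to prove is actually \emph{false}: isotopy through embeddings is a trivial equivalence on knots (a trefoil can be isotoped through embeddings to a round circle by shrinking the knotted arc to a point), yet the trefoil and the unknot lie in different $G^+$-orbits, since a homeomorphism of $\R^3$ carrying one to the other would give a homeomorphism of their complements, whose fundamental groups differ. Correspondingly, the topological isotopy-extension theorem you invoke requires local flatness and simply does not apply, and your proposed repair (restricting to locally flat isotopies) proves a different statement, not Fact~\ref{fact}. Once ``isotopy'' is read as ambient isotopy, the ``main obstacle'' you describe disappears, and the entire mathematical content of the Fact sits in the forward direction you already handled correctly.
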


The following theorem can be proved using an idea similar to the proof of Proposition~\ref{pr3}.
\begin{theorem}\label{f2} Each orbit of $G^+_{PL}$ on $\K_T$ is  dense in $\K_T$.
\end{theorem}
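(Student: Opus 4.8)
The plan is to show that the orbit of a given tame knot $C$ meets every basic Vietoris neighbourhood of every tame knot, and then to invoke Fact~\ref{fact}. First I would reduce to polygonal targets: polygonal knots are dense in $\K_T$ (sufficiently fine inscribed polygons of a tame knot are simple closed polygons converging to it in the Hausdorff metric), so it suffices to find, for every polygonal knot $D$ and every basic neighbourhood $V=\langle V_1,\dots,V_p\rangle$ of $D$ with $D\subset\bigcup_k V_k$ and $D\cap V_k\neq\emptyset$, a tame knot $E\in V$ having the same PL-isotopy type as $C$. By Fact~\ref{fact} such an $E$ lies in the orbit $G^+_{PL}\cdot C$, so the orbit meets $V$, and density follows.

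The construction of $E$ mimics the proof of Proposition~\ref{pr3}. I would choose $\e>0$ so small that $N_\e(D)$ is a PL tubular neighbourhood $T$ of $D$ contained in $\bigcup_k V_k$, thin enough that each $V_k$ contains the $\e$-ball about some point of $D$. The solid torus $T$ is in general knotted, but on removing one narrow meridian ``sector'' $S\subset T$ the remainder $T'=\cl(T\setminus S)$ is a PL-ball, exactly as $Q'=\widetilde{U_2}\cup\dots\cup\widetilde{U_n}$ is a cube in Proposition~\ref{pr3}. Following that proof, I would take a cube $Q\supset C$ and a PL-isotopy of $\R^3$ starting at the identity (hence landing in $G^+_{PL}$) that expands $Q$ onto $T'$ and spreads $C$ so that its image $E$ runs the full length of $T$, out and back, poking into the removed sector $S$ at both ends. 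Then $E$ is $\e$-dense in $T$, so $E\subset\bigcup_k V_k$ and $E\cap V_k\neq\emptyset$ for every $k$; that is, $E\in V$.

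It remains to arrange that $E$ has the PL-isotopy type of $C$, and this is the one point where the knottedness of $D$ must be handled with care; I expect it to be the main obstacle. Naively, a simple closed curve filling $T$ and winding once around its core is forced to have $D$ as a companion, so its type could never be $[C]$ when $D$ is nontrivial. The resolution is precisely that $T'$ is a \emph{ball}: the curve $E$ is made to traverse $T'$ with net longitudinal winding zero (out along $T$ and back), so it carries no companion torus, and inside the ball $T'$ one is free to tie $C$ into it locally, using $\mathrm{unknot}\,\#\,C=C$. Thus $E$ is a polygonal knot of type $[C]$ lying in $V$, completing the argument. The remaining checks—that the expansion is realised by an orientation-preserving PL-homeomorphism and that $\e$-density of $E$ in $T$ forces $E$ to meet each $V_k$—are routine along the lines of Proposition~\ref{pr3}.
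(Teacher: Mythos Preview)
Your approach matches the paper's: reduce to polygonal $D$, remove a meridian sector from a thin tubular neighbourhood to obtain a PL-ball $T'$, and carry $C$ into $T'$ by an ambient PL-isotopy of $\R^3$ so that the image meets every piece of a circular-chain cover. The paper works directly with the special local base $\langle W_1,\dots,W_n\rangle$ whose members are the sectors of the tube (so that meeting the two end-sectors forces, by connectedness, meeting all of them), whereas you use an arbitrary basic set and $\e$-density; both routes are fine.

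Your third paragraph, however, manufactures a difficulty that is not there. In the second paragraph you already noted that the PL-isotopy starts at the identity, so its time-$1$ map $g$ lies in $G^+_{PL}$; hence $E=g(C)$ belongs to the $G^+_{PL}$-orbit of $C$ \emph{by construction}, with no appeal to Fact~\ref{fact} needed. There is nothing left to ``arrange'': an ambient isotopy preserves knot type automatically, so the knottedness of $D$ is irrelevant at this stage and the winding-number and connect-sum discussion is superfluous. The only place where the geometry of $D$ genuinely enters is the existence of an ambient PL-isotopy of $\R^3$ carrying the cube $Q$ onto $T'$; this holds because the slit tube $T'$, although bent along the (possibly knotted) polygon $D$, can be PL-straightened edge by edge to a standard cube. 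That is the step deserving a sentence of justification, not the knot type of $E$.
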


\begin{proof}

Choose arbitrary $C\in \K_T$ and $D\in \K_P$. Consider  a basic neighborhood  of $D$ homeomorphic to $\langle \widetilde{U_1},\dots,\widetilde{U_n}\rangle \cap \K_T$ of the form $\langle W_1,\dots,W_n\rangle\cap \K_T$, where $W_k$ is an open polyhedral cube  homeomorphic to $\widetilde{U_k}$ for each $k$, and the union
$Q'=W_2\cup\dots\cup W_n$ is  a polyhedron homeomorphic to an open  cube $Q=(-M, M)^3$ containing $C$. As in the proof of Proposition~\ref{pr3}, replacing sets $\widetilde{U_k}$ with $W_k$, construct a PL-isotopic deformation $I:\R^3\times [0,1]\to \R^3$ such that the homeomorphism $g:=I(\cdot,1):\R^3\to \R^3$  satisfies  $g(C)\in \langle W_1,\dots,W_n\rangle\cap \K_T$. So, we have shown that each polyhedral knot $D$ is arbitrarily close to an element of the orbit of $C$. It follows that any element of the orbit of $D$ is arbitrarily close to an element of the orbit of $C$.

\end{proof}

\begin{remark}\label{rem}
Recall that Fact~\ref{fact} and Theorem~\ref{f2} have their analogs in the plane for the group $G$ of autohomeomorphisms of $\R^2$ and its subgroup  $G_{PL}$ of PL-autohomeomorphisms:  $G$ acts transitively on  $\mathcal S(\R^2)$,  every $C,D\in \mathcal S(\R^2)$ ($C,D\in \mathcal S_P(\R^2)$) have the same isotopy type (PL-isotopy type) in $\R^2$, and orbits of $G_{PL}$ are dense in $\mathcal S(\R^2)$ (see, e.g.,~\cite{Ke}).
\end{remark}

\section{Cantor manifolds and ANR's}
For  definitions and basic properties of various types of infinite dimensions, in particular of the strong countable dimension and weak (strong) infinite dimension, the reader is referred to~\cite{Eng}.
An infinite-dimensional space $X$ is called an \emph{infinite-dimensional  Cantor manifold} if no finite-dimensional closed subset separates $X$. If no weakly infinite-dimensional closed subset separates $X$, then $X$ is \emph{strongly infinite-dimensio\-nal  Cantor manifold}.
The Hilbert cube $[0,1]^\N$ and $\hat{c_0}=\{(x_j)_{j=1}^\infty\in [0,1]^\N: \lim_j x_j=0\}$ are examples of strongly infinite-dimensional  Cantor manifolds. The space $\sigma=\{(x_j)_{j=1}^\infty\in [0,1]^\N: \text{$x_j=0$ for  all but finitely many $j$}\}$ is an infinite-dimensional  Cantor manifold.

\begin{theorem}\label{t3}
\begin{enumerate}
\item The hyperspaces  $\mathcal S(\R^2)$ and $\K_T$ are strongly infinite-dimensional  Cantor manifolds that contain closed copies of $\hat{c_0}$ in each basic set.
\item
 The hyperspaces  $\mathcal S_P(\R^2)$ and $\K_P$ are infinite-dimensional  Cantor manifolds that contain closed copies of $\sigma$ in each basic set.
\end{enumerate}
\end{theorem}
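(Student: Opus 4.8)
The plan is to prove each item in two independent parts: the existence of the closed copies of $\hat{c_0}$ (resp. $\sigma$) inside every basic set, and the Cantor-manifold property. I would first record the reductions that make the second part manageable. Since each hyperspace is arcwise connected (Theorem~\ref{t2}), it is connected; and once the embeddings of the first part are in hand, each space contains a Hilbert cube (sitting inside a copy of $\hat{c_0}$) or at least a copy of $\sigma$, hence is strongly infinite-dimensional (resp. infinite-dimensional). Thus it remains to verify that no weakly infinite-dimensional (resp. finite-dimensional) closed set $F$ separates the space, i.e. that the open, locally connected complement $X\setminus F$ is connected.

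For the embeddings I would adapt the closed embedding of $c_0$ into $\mathcal S(\R^n)$ from~\cite{K1,K1e}, localized to a basic set. Fix a point $p$ on the central curve and a null sequence of pairwise disjoint subarcs clustering at $p$; over the $j$-th subarc attach a ``tooth'' of height $x_j\in[0,1]$ at a fixed vertical scale, obtaining a curve $C(x)$. The key geometric facts are that the Hausdorff distance between $C(x)$ and $C(x')$ is comparable to $\sup_j|x_j-x_j'|$, so $x\mapsto C(x)$ is a homeomorphism onto its image for the sup-metric, and that $C(x)$ is a simple closed curve exactly when $x_j\to0$ (otherwise infinitely many teeth of bounded height accumulate at $p$ and local connectedness fails). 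Hence the image is a copy of $(c_0,\|\cdot\|_\infty)\cong\hat{c_0}$ which is closed in $\mathcal S(\R^2)$, and it lies in the prescribed basic set once the teeth are small. For $\K_T$ I would run the same construction in a plane of $\R^3$, so every $C(x)$ is planar and therefore tame, and a Hausdorff limit that is a simple closed curve is again planar, hence again in $\K_T$. For the polygonal spaces $\mathcal S_P(\R^2)$ and $\K_P$ the teeth are triangular, and $C(x)$ is a polygon precisely when $x$ has finite support; the resulting copy is $\{x\in c_0:\ x_j=0\ \text{for almost all }j\}\cong\sigma$, closed in $\mathcal S_P(\R^2)$ (resp. $\K_P$) because any polygonal Hausdorff limit again has finitely many teeth.

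For the Cantor-manifold property I would argue by contradiction from a separation $X\setminus F=A\sqcup B'$ with $A,B'$ nonempty, disjoint, and open. First $F$ has empty interior: every basic set contains one of the copies above (hence a Hilbert cube, resp. an infinite-dimensional $\sigma$), which cannot be contained in the weakly infinite-dimensional (resp. finite-dimensional) set $F$. Consequently $\overline A\cup\overline{B'}=X$, so by connectedness there is a curve $C\in\overline A\cap\overline{B'}$, and I may choose $a\in A$ and $b\in B'$ arbitrarily close to $C$, in particular inside one small basic set $W$. The crux is then the following \emph{local cube lemma}: any two points $a,b$ of a basic set $W$ lie in a common Hilbert cube (resp. copy of $\sigma$) contained in $W$. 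Granting it, the set $F\cap Q$ separates such a cube $Q$ between $a$ and $b$; but $Q$ is a strongly infinite-dimensional (resp. infinite-dimensional) Cantor manifold and $F\cap Q$ is weakly infinite-dimensional (resp. finite-dimensional), so it cannot separate $Q$ — a contradiction. Hence $X\setminus F$ is connected and $X$ is the asserted Cantor manifold.

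I expect the local cube lemma to be the main obstacle. To prove it I would join $a$ to $b$ by a short ambient isotopy of $\R^2$ (resp. $\R^3$), available because in a small basic set the two curves are isotopic through a motion staying in $W$ (using homogeneity of $\mathcal S(\R^2)$, Proposition~\ref{pr1}, and local homogeneity of $\K_T,\K_P$, Proposition~\ref{pr2}); in the polygonal and tame cases the isotopy is taken PL and planar, respectively. Letting $\overline{h_t}$ denote the induced homeomorphisms on the hyperspace and $C_a(x)$ the curve obtained by attaching to $a$ a null sequence of teeth of geometrically decreasing size (finite support and triangular teeth in the polygonal cases), I would set $\Psi(t,x)=\overline{h_t}\bigl(C_a(x)\bigr)$ and take $Q=\Psi\bigl([0,1]\times[0,1]^\N\bigr)$. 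Then $\Psi(0,\mathbf 0)=a$, $\Psi(1,\mathbf 0)=b$, while $[0,1]\times[0,1]^\N$ is again a Hilbert cube and $[0,1]\times\sigma\cong\sigma$. The delicate points — which I regard as the heart of the argument — are to verify that $\Psi$ is injective, so that $Q$ is a genuine Hilbert cube (resp. copy of $\sigma$) and not a degenerate image, which requires arranging the isotopy together with one distinguished tooth so that the parameter $t$ can be recovered from $\Psi(t,x)$, and to keep the whole image inside $W$ while preserving polygonality (finitely many teeth, PL isotopy) or tameness (planar construction). The separation argument above then applies verbatim.
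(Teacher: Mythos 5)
Your first half (closed copies of $\hat{c_0}$ and $\sigma$ in every basic set via a ``teeth'' embedding of $[0,1]^\N$ with $f^{-1}(\mathcal S(\R^2))=\hat{c_0}$, $f^{-1}(\mathcal S_P(\R^2))=\sigma$, transported to arbitrary basic sets by homogeneity/local homogeneity) is essentially the paper's construction and is sound. The genuine gap is your \emph{local cube lemma}, specifically its proposed proof for $\K_T$ and $\K_P$. Two knots $a\in A$, $b\in B'$ lying in one small basic set need \emph{not} be ambient isotopic at all: replacing a small arc of $a$ by a small knotted arc produces a knot Hausdorff-close to $a$ but of a different knot type, so no isotopy $h_t$ of $\R^3$ carries $a$ to $b$, and your map $\Psi(t,x)=\overline{h_t}(C_a(x))$ cannot even be defined. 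Proposition~\ref{pr2} cannot rescue this: local homogeneity produces a homeomorphism between hyperspace neighborhoods, which is a completely different object from an ambient isotopy of $\R^3$ moving $a$ to $b$. (Even in the plane your auxiliary claim that the isotopy can be kept ``staying in $W$'' is false --- a curve in the thin annulus that is inessential in the annulus cannot be isotoped to the core curve without leaving the annulus --- although that requirement is not actually needed for your contradiction.) On top of this, the injectivity of $\Psi$, which you yourself flag as the heart of the matter, is left unresolved, so even where $h_t$ exists, $Q$ is not yet known to be a Hilbert cube or a copy of $\sigma$.

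The paper's proof is built precisely to dodge both problems, and the difference is instructive. Instead of trying to put two \emph{prescribed} points on opposite sides of the separator into one Cantor manifold, it places the whole copy $\mathcal A$ of $\hat{c_0}$ (resp.\ $\sigma$) in a basic neighborhood of a point of $\mathcal Z=\bd_{\mathcal C}\mathcal V=\bd_{\mathcal C}\mathcal V'$, assumes $\mathcal A\subset\cl_{\mathcal C}\mathcal V$, picks $A\in\mathcal A\cap\mathcal V$, and then uses the density of PL-isotopy orbits (Fact~\ref{fact}, Theorem~\ref{f2}, Remark~\ref{rem}) to choose $B\in\mathcal B\cap\mathcal V'$ \emph{inside the orbit of $A$}; the needed ambient isotopy $I$ with $I(A,1)=B$ then exists by the choice of $B$, not by any (false) local rigidity of knots. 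Applying the induced homotopy $\mathcal I$ to all of $\mathcal A$ and setting $t_0=\max\{t:\mathcal I(\mathcal A,t)\subset\cl_{\mathcal C}\mathcal V\}$, one gets $t>t_0$ at which the slice $\mathcal I(\mathcal A,t)$ --- a homeomorphic, hence connected, copy of $\mathcal A$ --- meets both $\mathcal V$ and $\mathcal V'$, so $\mathcal Z$ separates it, contradicting that $\hat{c_0}$ (resp.\ $\sigma$) is a strongly infinite-dimensional (resp.\ infinite-dimensional) Cantor manifold. Note that only the individual slices $\mathcal I(\mathcal A,t)$ are used, never the union over $t$, so no injectivity in the $t$-direction is ever required. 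Your argument could be repaired along exactly these lines (choose $b$ in the dense isotopy orbit of $a$ rather than merely close to the boundary point), but then it collapses into the paper's sweep argument, which is both simpler and avoids the injectivity question entirely.
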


\begin{proof}
First, construct a special  embedding $f: [0,1]^\N \to C(\R^2)$. Fix a sequence $\frac{2\pi}{m}=\alpha_1 >\alpha_2>\dots >0$      converging to $0$ and let $p_j$ be the point of intersection of the ray from $0$ through $e^{i\alpha_j}$ with polygon $P$, for each $j\ge 1$.
Given a sequence $(x_j)_{j=1}^\infty\in [0,1]^\N$, let $f((x_j)_{j=1}^\infty)$ be the closure of
 the union of segments $$[e^{i\frac{2\pi k}{m}},  e^{i\frac{2\pi (k+1)}{m}}],\quad [p_{2j-1}, (1+\frac{x_j}{n})p_{2j}],\ \text{and}\  [(1+\frac{x_j}{n})p_{2j},p_{2j+1}]$$ for $k=1,\dots,m-1$, $j\ge1$. Then $f((x_j)_{j=1}^\infty)\in \langle U_1,\dots,U_n\rangle$ and
 $f$ is an embedding satisfying $$f((x_j)_{j=1}^\infty)\in \mathcal S(\R^2)\quad \text{if and only if}\quad (x_m)_{m=1}^\infty\in \hat{c_0}$$
and $$f((x_j)_{j=1}^\infty)\in \mathcal S_P(\R^2)\quad \text{if and only if}\quad (x_m)_{m=1}^\infty\in \sigma.$$
Also, the mapping $\widetilde{f}((x_j)_{j=1}^\infty)= f((x_j)_{j=1}^\infty)\times \{0\}\in \langle \widetilde{U_1},\dots,\widetilde{U_n}\rangle$ is an embedding satisfying
$$\widetilde{f}((x_j)_{j=1}^\infty)\in \K_T \quad \text{if and only if}\quad (x_m)_{m=1}^\infty\in \hat{c_0}$$ and
$$\widetilde{f}((x_j)_{j=1}^\infty)\in \K_P \quad \text{if and only if}\quad (x_m)_{m=1}^\infty\in \sigma.$$
Equivalently, the above properties mean that $f(\hat{c_0})=f([0,1]^\N )\cap \mathcal S(\R^2)$, $f(\sigma)=f([0,1]^\N )\cap \mathcal S_P(\R^2)$,
$\widetilde{f}(\hat{c_0})=\widetilde{f}([0,1]^\N)\cap \K_T$, $\widetilde{f}(\sigma)=\widetilde{f}([0,1]^\N)\cap \K_P$. Hence, $f(\hat{c_0})$,  $f(\sigma)$, $\widetilde{f}(\hat{c_0})$, and $\widetilde{f}(\sigma)$ are closed subsets of the neighborhoods $\langle U_1,\dots,U_n\rangle \cap \mathcal C$ or $\langle \widetilde{U_1},\dots,\widetilde{U_n}\rangle \cap \mathcal C$ for  respective hyperspaces $\mathcal C\in\{\mathcal S(\R^2), \mathcal S_P(\R^2), \K_T, \K_P\}$.
Since the open base of contractible sets in $\mathcal C$, as in Theorem~\ref{t2}, consists of homeomorphic images of  $\langle U_1,\dots,U_n\rangle \cap \mathcal C$ or $\langle \widetilde{U_1},\dots,\widetilde{U_n}\rangle \cap \mathcal C$  under induced homeomorphisms of the entire plane or the rings $\widetilde{U}$, $n=7,8,\dots$, each  basic set contains a closed copy of $\hat{c_0}$ or $\sigma$, respectively.

Let a closed subset $\mathcal Z$ of $\mathcal C\in\{\mathcal S_P(\R^2), \mathcal S(\R^2), \K_P, \K_T\}$ separate $\mathcal C$, i.e.,  $\mathcal C\setminus \mathcal Z= \mathcal V \cup \mathcal V'$, where  $\mathcal V $ and $\mathcal V'$ are nonempty, open in $\mathcal C$, and disjoint.   Since $\mathcal C$ is connected by Propositions~\ref{pr1} and~\ref{pr3}, $\mathcal Z$ is nonempty. We can also assume $\mathcal Z=\bd_{\mathcal C}\mathcal V=\bd_{\mathcal C}\mathcal V'$, where $\bd_{\mathcal C}$ stands for the ``boundary in $\mathcal C$'' operator.

By Propositions~\ref{pr1} and~\ref{pr2}, there exists a  homeomorphism $\mathfrak h$ which maps  $\langle U_1,\dots,U_n\rangle \cap \mathcal C$ or  $\langle \widetilde{U_1},\dots,\widetilde{U_n}\rangle \cap \mathcal C$, depending on  whether
$$\mathcal C\in\{\mathcal S_P(\R^2), \mathcal S(\R^2)\}\quad\text{or}\quad \mathcal C\in\{\K_P, \K_T\},$$  onto an open neighborhood $\mathcal B$ in $\mathcal C$ of $\mathfrak h(P) \in \mathcal Z$.
Denote by $\mathcal A$  either of the sets:

  $\mathfrak h(f(\hat{c_0}))$  if $\mathcal C=\mathcal S(\R^2)$,

  $\mathfrak h(\widetilde{f}(\hat{c_0})$ if $\mathcal C=\K_T$,

  $\mathfrak h(f(\sigma))$ if $\mathcal C=\mathcal S_P(\R^2)$,

 $\mathfrak h(\widetilde{f}(\sigma))$ if $\mathcal C=\K_P$.

  For $\mathcal C\in\{\mathcal S(\R^2), \K_T\}$ suppose that $\mathcal Z$
 is weakly infinite-dimensional and for $\mathcal C\in\{\mathcal S_P(\R^2), \K_P\}$ suppose that $\mathcal Z$
  is finite-dimensional.

Assume that $\mathcal A \subset \cl_{\mathcal C}\mathcal V$, where $\cl_{\mathcal C}\mathcal V$ is the closure (in $\mathcal C$)    of $\mathcal V$.
Since  $\mathcal A$ is strongly infinite-dimensional  in case (1), and  $\mathcal A$ is infinite-dimensional in case (2), it contains an element $A\in \mathcal V$.
By Fact~\ref{fact}, Theorem~\ref{f2}, and Remark~\ref{rem}, there exist  $B\in \mathcal B \cap \mathcal V'$
 and
 a PL-isotopy $I:\R^2\times [0,1]\to \R^2$ or $I:\R^3\times [0,1]\to \R^3$, respectively, such that $I(A,0)=A$, $I(A,1)=B$. Let $t_0=\max\{t: \mathcal I(\mathcal A, t)\subset \cl_{\mathcal C}\mathcal V\} $, where $\mathcal I$ is the induced homotopy on $\mathcal C$. Notice that $t_0<1$. Since the set
$\mathcal I(\mathcal A, t_0)$ is homeomorphic to $\mathcal A$,  it is not contained in $\mathcal Z$.
%$\bd_C \mathcal V$.
 Choose $A_0\in \mathcal I(\mathcal A, t_0)\cap \mathcal V$ and let  $d(A_0, \mathcal C\setminus \mathcal V)= \inf\{d(A_0,C):C\in \mathcal C\setminus \mathcal V\}$, where $d$ is the Hausdorff distance in $\mathcal C$.
Now, by continuity of $\mathcal I$, there is $t>t_0$ such that  $d(\mathcal I(A_0, t), \mathcal C\setminus \mathcal V)>0$. Then $\mathcal I(\mathcal A, t)$ meets both the sets $\mathcal V$ and $\mathcal V'$.  Being a copy of $\mathcal A$,  $\mathcal I(\mathcal A, t)$ is connected, so it intersects $\mathcal Z$. This means that $\mathcal Z$ separates $\mathcal I(\mathcal A, t)$, which  contradicts the fact that $\mathcal A$ is strongly infinite-dimensional Cantor manifold  in case (1), and  $\mathcal A$ is infinite-dimensional Cantor manifold in case (2).

\end{proof}

\begin{theorem}\label{ANR}
Hyperspaces $\mathcal S_P(\R^2)$ and $\K_P$ are $\sigma$-compact, strongly countable-dimensional ANR's. 
\end{theorem}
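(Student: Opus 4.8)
The plan is to verify the three asserted properties in turn — $\sigma$-compactness, strong countable-dimensionality, and the ANR property — and to deduce the last one from the first two together with the strong local contractibility already furnished by Theorem~\ref{t2}. I would write the argument for $\mathcal S_P(\R^2)$ in detail; the case of $\K_P$ is the same after replacing the plane by $\R^3$ and planar edges by edges in $\R^3$, with the dimension counts scaled accordingly.

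For $\sigma$-compactness I would refine the decomposition $\mathcal S_P(\R^2)=\bigcup_{k,l}K_{k,l}$ of Theorem~\ref{t4} into genuinely compact pieces. For integers $k\ge 3$, $l,R\ge 1$ set $Q_{k,l,R}$ to be the set of simple closed polygons $C\subset \cl B(0,R)$ with at most $k$ edges such that any two non-adjacent closed edges lie at distance $\ge\frac1l$, each edge has length $\ge\frac1l$, and each interior angle lies in $[\frac1l,2\pi-\frac1l]$. Every one of these conditions is closed under Hausdorff limits, and the extra length and angle bounds are exactly what forbid the two degenerations (edge collapse and spike formation) that could otherwise push a limit of simple closed polygons out of $\mathcal S_P(\R^2)$. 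Hence $Q_{k,l,R}$ is closed in $C(\R^2)$, bounded, and contained in $\mathcal S_P(\R^2)$, so it is a compact subset of the hyperspace; since every simple closed polygon meets all the bounds for suitable $k,l,R$, we obtain $\mathcal S_P(\R^2)=\bigcup_{k,l,R}Q_{k,l,R}$, a countable union of compacta.

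For strong countable-dimensionality it then suffices to show each $Q_{k,l,R}$ is finite-dimensional, since $\mathcal S_P(\R^2)$ would be a countable union of closed finite-dimensional subspaces. I would stratify $Q_{k,l,R}=\bigcup_{j=3}^{k}S_j$ by the exact number $j$ of essential vertices. On $S_j$ the vertex count is constant, so the map assigning to $C$ its cyclically ordered $j$-tuple of essential vertices is a well-defined continuous injection whose inverse (rebuilding the polygon from its vertices) is also continuous; thus $S_j$ embeds into the configuration space of $j$ cyclically ordered points of $\R^2$, giving $\dim S_j\le 2j\le 2k$. As $Q_{k,l,R}$ is a \emph{finite} union of these strata, the addition theorem for dimension yields $\dim Q_{k,l,R}<\infty$. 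Consequently $\mathcal S_P(\R^2)$ is strongly countable-dimensional, and for $\K_P$ the same argument through configuration spaces in $\R^3$ gives $\dim S_j\le 3j$. With both items in place, $\mathcal S_P(\R^2)$ is a countable union of finite-dimensional compacta which, by Theorem~\ref{t2}, is locally contractible; Haver's theorem \cite{Haver} on locally contractible countable unions of finite-dimensional compacta then shows that it is an ANR, and identically for $\K_P$.

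The delicate step is the finite-dimensionality of the compact pieces, which must coexist with the fact — already visible from Theorem~\ref{t3}, where every basic set contains a copy of $\sigma$ — that the hyperspace is infinite-dimensional at each of its points. The resolution is that a single $Q_{k,l,R}$ caps the number of edges and outlaws degenerations, so that the vertex-extraction map becomes an embedding on each stratum; the points needing genuine care are verifying that this extraction is continuous (that essential vertices vary continuously inside a stratum, and that no vertex merges or splits without leaving the stratum) and that the chosen length and angle bounds really force $Q_{k,l,R}$ to be closed in $C(\R^2)$. Once these are settled the three properties, and hence Theorem~\ref{ANR}, follow.
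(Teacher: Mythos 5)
Your proposal is correct, but it reaches the two substantive properties by a genuinely different route than the paper. The paper obtains $\sigma$-compactness directly from Theorem~\ref{t4} (the hyperspaces are $F_\sigma$ subsets of the $\sigma$-compact hyperspaces $C(\R^2)$ and $C(\R^3)$), and obtains strong countable-dimensionality by citing K. Sakai's theorem~\cite{S} that the hyperspace $Pol(\R^n)$ of connected compact polyhedra is $\sigma$-compact and strongly countable-dimensional, together with the fact that $\sigma$-compact subspaces inherit this property; both arguments then finish, as yours does, with Haver's theorem (the paper's reference~\cite{H}). You instead build an explicit decomposition $\mathcal S_P(\R^2)=\bigcup_{k,l,R} Q_{k,l,R}$ into compacta via quantitative non-degeneracy bounds (edge length, separation of non-adjacent edges, angle bounds) and prove each piece finite-dimensional by stratifying by the number of essential vertices and embedding each stratum into a configuration space. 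This is more elementary and self-contained: it replaces Sakai's theorem by a direct argument and yields the concrete statement that the hyperspaces are countable unions of finite-dimensional compacta, which is precisely the hypothesis of Haver's theorem in its original formulation. The price is that the steps you flag as delicate genuinely need care, though all of them go through: the strata $S_j$ are not closed in $Q_{k,l,R}$ (an essential vertex can straighten, with angle tending to $\pi$), so you are right to invoke the Menger--Urysohn addition theorem rather than the closed sum theorem; continuity of vertex extraction on a stratum follows from a subsequence argument using the uniform vertex bound together with the non-degeneracy bounds, which force convergent vertex tuples to converge exactly to the essential vertices of the limit; and in $\R^3$ the ``interior angle'' condition should be restated as the angle between the two edge vectors at a vertex being bounded away from $0$, since a polygonal knot has no interior. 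None of these is a gap --- the bounds you chose are exactly the ones that make the closure and continuity claims true.
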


\begin{proof}
 K. Sakai proved  in~\cite{S} that the Vietoris hyperspace $Pol(X)$ of all connected compact polyhedra in a compact convex subset $X$ of $\R^n$, $n>1$, is $\sigma$-compact, strongly countable-dimensional. Hence, the hyperspace $Pol(\R^n)$ of  connected compact polyhedra in $\R^n$ also is $\sigma$-compact and strongly countable-dimensional. It follows from Theorem~\ref{t4} that $\mathcal S_P(\R^2)$ and $\K_P$ are $\sigma$-compact, and  being such subspaces of respective $Pol(\R^n)$, they are strongly countable-dimensional. Recall that both the hyperspaces are locally contractible (Theorem~\ref{t2}). By a result of W. E. Haver~\cite{H},  each $\sigma$-compact, strongly countable-dimensional and locally contractible metric space is an ANR.

\end{proof}

\section{Questions}

Obviously, hyperspaces $\K$, $\K_W$, or $\K_T$ are not homogeneous by  homeomorphisms induced by  autohomeomorphisms of $\R^3$.

\begin{question}\label{q1}
Are hyperspaces $\K$, $\K_W$, or $\K_T$ homogeneous?
\end{question}

\begin{question}\label{q2}
Are strongly infinite-dimensional hyperspaces $\mathcal S(\R^2)$,  $\K$, $\K_W$, or $\K_T$ ANR's?
\end{question}

\begin{question}\label{q3}
What is the  exact Borel class of $\K_T$  in $C(\R^3)$?
\end{question}

\

\subsection*{Acknowledgments}
We  are grateful to Aarno Hohti for his remarks, especially for suggesting the simple idea of the proof of Fact~\ref{t4}. Our thanks also go to the referee for a careful scrutiny.

\bibliographystyle{amsplain}

\begin{thebibliography}{99}



\bibitem{Ca1} R. Cauty, L'espace des arcs d'une surface, Trans. Amer. Math. Soc. 332 (1992),193–209.

\bibitem{Ca2} R. Cauty, L'espace des pseudo-arcs d'une surface, Trans. Amer. Math. Soc. 331 (1992), 247-263.
\bibitem{DMM} J. J. Dijkstra, J. van Mill and J. Mogilski, The space of infinite-dimensional compacta and other topological copies of $(l_f^2)^\omega$, Pacific J. Math. 152 (1992), 255--273.
\bibitem{Eng} R. Engelking, Theory of dimensions finite and infinite, Heldermann Verlag, Lemgo, 1995.
 \bibitem{H} W. E. Haver,  A covering property for metric spaces. Topology Conference (Virginia Polytech. Inst. and State Univ., Blacksburg, Va., 1973), Lecture Notes in Math., Vol. 375, Springer, Berlin-New York, 1974, 108–113.
\bibitem{Ke} L. V. Keldyš,  Topological imbeddings in Euclidean space,  Proceedings of the Steklov Institute of Mathematics, No. 81 (1966), American Mathematical Society, Providence, RI, 1968.
\bibitem{K1} P. Krupski, Hyperspaces of various locally connected subcontinua, Acta Univ. Carolin. Math. Phys. 40 (1999), no. 2, 79-83.
\bibitem{K1e} P. Krupski, Errata to: "Hyperspaces of various locally connected subcontinua''. Acta Univ. Carolin. Math. Phys. 41 (2000), no. 1, 81.
\bibitem{Ku} V. Kulikov, A non-classification result for wild knots, Trans. Amer. Math. Soc. 369 (2017), 5829–5853.

\bibitem{Mi} J. Milnor, Most knots are wild, Fund. Math. 54 (1964),     335--338.
\bibitem{MO1} https://mathoverflow.net/questions/203202/descriptive-complexity-of-knot-equivalence
\bibitem{MO2} https://mathoverflow.net/questions/42782/space-of-all-topological-knots-tame-and-wild
\bibitem{Na}  S. B. Nadler, Jr., Hyperspaces of sets, Marcel Dekker, Inc., New York-Basel, 1978.

\bibitem{RN}  C. Ryll-Nardzewski, On Borel measurability of orbits. Fund. Math. 56 (1964), 129–130.

\bibitem{S} K. Sakai, On hyperspaces of polyhedra. Proc. Amer. Math. Soc. 110 (1990), no. 4, 1089–1097.

\end{thebibliography}

\end{document}